\newtheorem{theorem}{Theorem}[section]
\newtheorem{prop}[theorem]{Proposition}
\newtheorem{defn}[theorem]{Definition}
\newtheorem{lemma}[theorem]{Lemma}
\newtheorem{coro}[theorem]{Corollary}
\newtheorem{prop-def}{Proposition-Definition}[section]
\newtheorem{remark}[theorem]{Remark}
\newtheorem{exam}[theorem]{Example}
\newcommand{\nc}{\newcommand}
\newcommand{\delete}[1]{}
\nc{\mlabel}[1]{\label{#1}}  
\nc{\mcite}[1]{\cite{#1}}  
\nc{\mref}[1]{\ref{#1}}  
\nc{\mbibitem}[1]{\bibitem{#1}} 
\nc{\mlabel}[1]{\label{#1}  
{\hfill \hspace{1cm}{\bf{{\ }\hfill(#1)}}}}
\nc{\mcite}[1]{\cite{#1}{{\bf{{\ }(#1)}}}}  
\nc{\mref}[1]{\ref{#1}{{\bf{{\ }(#1)}}}}  
\nc{\mbibitem}[1]{\bibitem[\bf #1]{#1}} 
\nc{\bfk}{\mathbf{k}}
\nc{\Der}{\mathrm{Der}}
\nc{\Ker}{\mathrm{Ker}}
\begin{document}

\title{Hom 3-Lie-Rinehart Algebras }\footnotetext{ Corresponding author: Ruipu Bai, E-mail: bairuipu@hbu.edu.cn.}

\author{RuiPu  Bai}
\address{College of Mathematics and Information Science,
Hebei University
\\
Key Laboratory of Machine Learning and Computational\\ Intelligence of Hebei Province, Baoding 071002, P.R. China} \email{bairuipu@hbu.edu.cn}

\author{Xiaojuan Li}
\address{College of Mathematics and Information  Science,
Hebei University, Baoding 071002, China} \email{lixiaojuan1209@126.com}

\author{Yingli Wu}
\address{College of Mathematics and Information  Science,
Hebei University, Baoding 071002, China} \email{15733268503@163.com}

\date{}

\begin{abstract}

After endowing with a 3-Lie-Rinehart structure on Hom 3-Lie algebras, we obtain a class of special
 Hom 3-Lie algebras, which have close relationships with representations of commutative associative algebras. We provide a special class of Hom 3-Lie-Rinehart algebras, called  split regular Hom 3-Lie-Rinehart algebras, and we then characterize their structures by means of root systems and weight systems associated to a splitting Cartan subalgebra.

\end{abstract}

\subjclass[2010]{17B05, 17D99.}

\keywords{ Hom 3-Lie algebra,  3-Lie-Rinehart algebra,  split regular Hom 3-Lie-Rinehart algebra}

\maketitle



\allowdisplaybreaks

\section{Introduction}

The study of 3-Lie algebras  \cite{F} gets a lot of attention since it has close relationships with  Lie algebras, Hom-Lie algebras, commutative associative algebras, and cubic matrices \cite{B2,B3,B5, B4}. For example, it is applied to the study of Nambu mechanics and the study of supersymmetry and gauge symmetry transformations of the world-volume theory of multiple coincident M2-branes \cite{HHY,BLG2,Ro1,T}. The authors of \cite{BGL,BCR,B,ST,B1} studied the symplectic structures on 3-Lie algebras,  3-Lie bialgebras and 3-Lie Yang-Baxter equation, and constructed the tensor form of skew-symmetric solutions of 3-Lie Yang-Baxter equation in  3-Lie algebras. In \cite{26,27,28}, Casas introduced the cross modules of Lie-Rinehart algebras and proved that three cohomology and Rinehart cohomology are isomorphic if  Lie-Rinehart algebras are projected onto the corresponding commutative algebras. Ashis Mandal et al defined the Hom-Lie-Rinehart algebras and discussed its extension in the small dimension cohomology space \cite{29}.

In  this paper, we continue to study 3-Lie-Rinehart algebras introduced in \cite{B6}. We endow with a 3-Lie-Rinehart structure on Hom 3-Lie algebras, and obtain a class of special
 Hom 3-Lie algebras. We also study a special class of Hom 3-Lie-Rinehart algebras, called  split regular Hom 3-Lie-Rinehart algebras.

 Unless otherwise stated,  algebras and vector spaces are over a field $F$ of characteristic zero,  and $A$ denotes a commutative associative algebra over $F$, $Z$ is the set of integer. For a vector space $V$ and its subset $S$,  $\langle S\rangle$ denotes the subspace of $V$ spanned by $S$.

\section{Preliminaries}

{\it A 3-Lie algebra}~\cite{F} is a vector space $L$ over $F$ endowed with an $F$-linear multiplication $[\ ,\  ,\  ]: L\wedge L\wedge L\rightarrow L$ satisfying for all $x_{1},x_{2},x_{3},y_{2},y_{3}\in L,$
\begin{equation}\label{eq:jacobi}
[[x_{1},x_{2},x_{3}],y_{2},y_{3}]=[[x_{1},y_{2},y_{3}],x_{2},x_{3}]+[[x_{2},y_{2},y_{3}],x_{3},x_{1}]+[[x_{3},y_{2},y_{3}],x_{1},x_{2}].
\end{equation}

Let $L$ be a 3-Lie algebra,  $V$ be a vector space, and
$\rho: L\wedge L\rightarrow gl (V)$ be an $F$-linear mapping. If $\rho$ satisfies that for all $x_i\in L, 1\leq i\leq 4,$

\begin{equation}\label{eq:mod1}
[\rho(x_1, x_2), \rho(x_3, x_4)]=\rho([x_1, x_2, x_3], x_4)-\rho([x_1, x_2, x_4], x_3),
  \end{equation}

\begin{equation}
\label{eq:mod2} \rho([x_1, x_2, x_3], x_4)=\rho(x_1, x_2)\rho(x_3, x_4)+\rho(x_2, x_3)\rho(x_1, x_4)+\rho(x_3, x_1)\rho(x_2, x_4),
\end{equation}
\\
then $(V, \rho)$ is called {\it a representation} of $L$, or $(V, \rho)$ is {\it an $L$-module}. The subspace $Ker\rho=\{x\in L~~~|~~~\rho(x,L)=0\}$ is called {\it the kernel of the representation \cite{Ka}.}

Thanks to \eqref{eq:jacobi}, $(L, \mbox{ad})$ is a representation of the 3-Lie algebra $L$, and it is called  the regular representation of $L$, where
$ \forall x, y, x_1, x_2, y_1, y_2\in L,$
\begin{equation}\label{eq:ad}
\mbox{ad}: L\wedge L\rightarrow gl(L), \mbox{ad}_{x, y}z=[x, y, z],~~~~ [\mbox{ad}_{x_1,  y_1}, \mbox{ad}_{x_2,  y_2}]=\mbox{ad}_{[x_1, y_1, x_2],  y_2}+\mbox{ad}_{x_2, [x_1, y_1, y_2]}.
\end{equation}

\begin{defn} \cite{B6}\label{defin:rinehart}
Let $A$ be an commutative associative algebra over $F$, $(A, \rho)$ be a 3-Lie algebra $L$-module with $\rho(L, L)\subseteq Der(A)$, and $L$ be an $A$-module.
If   $\rho$ satisfies that
\begin{equation}\label{eq:Rinhart1}
    [x, y, az]=a[x, y, z]+\rho(x, y)a z,~~\forall x, y, z\in L, a\in A,
\end{equation}
then  $(L, A, \rho)$ is  called {\it a weak 3-Lie-Rinehart algebra}. Furthermore,

$1)$ if $\rho$ satisfies
\begin{equation}\label{eq:Rinhart2}
 \rho(ax, y)=\rho(x,ay)=a\rho(x, y),~~\forall x, y\in L, a\in A,
 \end{equation}
 then  $(L, A, \rho)$ is  called {\it a 3-Lie-Rinehart algebra};

$2)$ if $\rho=0$, then  $(L, A)$ is called {\it a 3-Lie $A$-algebra.}
\end{defn}

\begin{exam}\label{exam:1}
Let $L=C^{\infty}(R^3)$ consist of 3-ary infinitely differentiable functions over the real field $R$, $A=L$ (as  vector spaces) be the commutative associative algebra with the product of the usual function. Then $(L,A,\rho_{\mbox{ad}})$ is a weak 3-Lie-Rinehart algebra,  but  it is not a 3-Lie-Rinehart algebra, where $\forall f, g, h\in L, a\in A$,
\begin{equation}\label{eq:af}
(af)(x,y,z)=a(x,y,z)f(x,y,z),
\end{equation}
\begin{equation}\label{eq:f}
[f,g,h]_{\partial}= \frac {\partial(f, g, h)}{\partial(x, y, z)}=\left|
                                \begin{array}{ccc}
                                  \partial_xf & \partial_yf & \partial_zf \\
                                  \partial_xg & \partial_yg & \partial_zg \\
                                  \partial_xh & \partial_yh & \partial_zh \\
                                \end{array}
                              \right|,\\
\end{equation}
$$\rho_{\mbox{ad}}:L\wedge L \rightarrow gl(A),\quad  \rho_{\mbox{ad}}(f, g)(a)=\frac {\partial(f, g, a)}{\partial(x, y, z)}\in A.$$
\end{exam}

\begin{proof}
Thanks to \cite{In}, $(L, [\ ,\ ,\ ]_{\partial})$ is a 3-Lie algebra and $L$ is an $A$-module according to the usual multiplication of functions.  Since $\forall f, g, h, d\in L, a, b\in A$,
$$
  \rho_{\mbox{ad}}(f,g)(ab)=\left|
                                \begin{array}{ccc}
                                  \partial_xf & \partial_yf & \partial_zf \\
                                  \partial_xg & \partial_yg & \partial_zg \\
                                  \partial_x(ab) & \partial_y(ab) & \partial_z(ab) \\
                                \end{array}
                              \right|=a\rho_{\mbox{ad}}(f,g)(b)+b\rho_{\mbox{ad}}(f,g)(a),$$
we get  $\rho_{\mbox{ad}}(L, L) \subseteq Der(A)$, and
\begin{equation*}
  \begin{split}
  &\rho_{\mbox{ad}}([f,g,h]_\partial,d)(a)=[[f,d,a]_\partial,g,h]_\partial+[f,[g,d,a]_\partial,h]_\partial+[f,g,[h,d,a]_\partial]_\partial\\
  =&\rho_{\mbox{ad}}(f,g)\rho_{\mbox{ad}}(h,d)(a)+\rho_{\mbox{ad}}(g,h)\rho_{\mbox{ad}}(f,d)(a)+\rho_{\mbox{ad}}(h,f)\rho_{ad}(g,d)(a),
  \end{split}
\end{equation*}

\begin{equation*}
  \begin{split}
  &[\rho_{\mbox{ad}}(f,g),\rho_{\mbox{ad}}(h,d)](a)=[f,g,[h,d,a]_\partial]_\partial-[h,d,[f,g,a]_\partial]_\partial\\
    =&\rho_{\mbox{ad}}([f,g,h]_\partial,d)(a)+\rho_{\mbox{ad}}(h,[f,g,a]_\partial).
  \end{split}
\end{equation*}
It follows that $(A,\rho_{\mbox{\mbox{ad}}})$ is a 3-Lie algebra $L$-module.
From \eqref{eq:f},
\begin{equation*}
  \begin{split}
[f,g,ah]_\partial=&\left|
                                \begin{array}{ccc}
                                  \partial_xf & \partial_yf & \partial_zf \\
                                  \partial_xg & \partial_yg & \partial_zg \\
                                  \partial_x(ah) & \partial_y(ah) & \partial_z(ah) \\
                                \end{array}
                              \right|=a[f,g,h]_\partial+[f,g,a]_\partial (h)=a[f,g,h]_\partial+\rho_{\mbox{ad}}(f,g)(a)h.
\end{split}
\end{equation*}
Therefore, $(L,A,\rho_{\mbox{ad}})$ is a weak 3-Lie-Rinehart algebra.

Let $a=x\in A$, $f=x, g=y \in L$. Then
$$\rho(af, g)h=2x\partial_z h\neq x\partial_z h=a\rho(f, g)h, ~~~~\forall h\in L.$$
Therefore, $(L,A,\rho_{\mbox{ad}})$ is not a 3-Lie-Rinehart algebra.
\end{proof}

\begin{exam}\label{exam:2}
 Let $B= C^{\infty}(R)$ be the commutative associative algebra over the real field $R$, and
  $$T=\langle xf(z),yf(z)~~~|~~~\forall f(z)\in C^\infty(R)\rangle \subseteq C^{\infty}(R^3).$$
 Then $(T, [ , , ]_{\partial})$ is a subalgebra of $L= C^{\infty}(R^3)$ in Example \ref{exam:1} and $T$ is a $B$-module according to the usual multiplication of functions. Since  $\forall u,v,w\in T, a, b\in B, $ $\rho_{\mbox{ad}}(T, T)\subseteq Der(B),$ and
 \begin{equation*}
  \rho_{\mbox{ad}}(au,v)(b)=a[u,v,b]_\partial+u[a,v,b]_\partial=a\rho_{\mbox{ad}}(u,v)(b)=\rho_{\mbox{ad}}(u,av)(b).
\end{equation*}
 We get that $(T, B, \rho_{\mbox{ad}})$ is a 3-Lie-Rinehart algebra.
\end{exam}

\begin{defn} \cite{P} \label{defin:hom}
 A {\it Hom 3-Lie algbra} is a triple $(L,[~,~,~]_L,\alpha)$ over $F$ consisting of a vector space $L$, a linear  multiplication $[~,~,~]_L:\wedge^3L\rightarrow L$ and an
$F$-linear map $\alpha:L\rightarrow L$,  satisfying
\begin{equation}\label{eq:h1}
\begin{split}
    [\alpha(x_1), \alpha(x_2), [x_3,x_4,x_5]_L]_L&=[[x_1,x_2,x_3]_L,\alpha(x_4),\alpha(x_5)]_L+[\alpha(x_3), [x_1,x_2,x_4]_L, \alpha(x_5)]_L\\
    &+[\alpha(x_3),\alpha(x_4),[x_1,x_2,x_5]_L]_L, ~~ \forall x_i\in L, 1\leq i\leq 5.
\end{split}
\end{equation}
The identity \eqref{eq:h1} is called the Hom-Jacobi identity. Further,

$1)$ if $\alpha$ is an algebra homomorphism, that is,  $\alpha([x,y,z]_L)=[\alpha(x),\alpha(y),\alpha(z)]_L$, $\forall x, y, z\in L$,  then $(L,[~,~,~]_L,\alpha)$ is called  a {\it multiplicative Hom 3-Lie algebra};

$2)$ if $\alpha$ is an algebra automorphism, then $(L,[~,~,~]_L,\alpha)$ is called a {\it regular Hom 3-Lie algebra}.
\end{defn}

 {\it A  sub-algebra (an ideal)} of  regular Hom 3-Lie algebra $(L,[~,~,~]_L,\alpha)$ is a subspace $I$ of $L$ satisfying  $[I,I,I]_L\subseteq I$ ~~(~~$[I, L, L]_L\subseteq I$~~) and  $\alpha(I)\subseteq I$.

 {\it The center} of the   regular Hom 3-Lie sub-algebra $(L,[~,~,~]_L,\alpha)$ is
\begin{equation}\label{eq:decenter}
Z(L)=\{ x ~~ |~~ x\in L, [x, L, L]_L=0~~\}.
\end{equation}

 It is clear that $Z(L)$ is an ideal.

\begin{exam}\label{exam:3}
Let
$L_1=\langle h(x, y)e^{kz}~|~ h(x,y)\in C^\infty(R^2),  k\in Z\rangle$ be the subspace of $L= C^{\infty}(R^3)$. Then $(L_1,[~,~,~]_\partial, \alpha_1)$ is a regular Hom 3-Lie algebra, where
$\alpha_1=-I_{d_{L_1}}: L_1\rightarrow L_1.$

In fact, from  \eqref {eq:f} and a direct computation, we have

\vspace{2mm}$
[fe^{k_1z},ge^{k_2z},he^{k_3z}]_\partial=
\left|
                                \begin{array}{ccc}
                                  \partial_xf & \partial_yf & k_1f\\
                                  \partial_xg & \partial_yg & k_2g\\
                                  \partial_xh& \partial_yh & k_3h \\
                                \end{array}
                              \right|e^{(k_1+k_2+k_3)z}\in L_1,
                              $

\vspace{2mm}$[\alpha_1(fe^{k_1z}),\alpha_1(ge^{k_2z}),\alpha_1(he^{k_3z})]_\partial=\alpha_1[fe^{k_1z},ge^{k_2z},he^{k_3z}]_\partial,$

\begin{equation*}
  \begin{split}
  &[\alpha_1(f_1e^{k_1z}),\alpha_1(f_2e^{k_2z}),[f_3e^{k_3z},f_4e^{k_4z},f_5e^{k_5z}]_\partial]_\partial\\
  =&[[f_1e^{k_1z},f_2e^{k_2z},f_3e^{k_3z}]_\partial,\alpha_1(f_4e^{k_4z}),\alpha_1(f_5e^{k_5z})]_\partial+[\alpha_1(f_3e^{k_3z}),[f_1e^{k_1z},f_2e^{k_2z},f_4e^{k_4z}]_\partial,\alpha_1(f_4e^{k_4z})]_\partial\\
  +&[\alpha_1(f_3e^{k_3z}),\alpha_1(f_4e^{k_4z}),[f_1e^{k_1z},f_2e^{k_2z},f_5e^{k_5z}]_\partial]_\partial,~~ \forall fe^{k_1z}, ge^{k_2z}, he^{k_3z}\in L_1, f_ie^{k_iz}\in L_1, 1\leq i\leq 5.
  \end{split}
  \end{equation*}
Therefore,  $(L_1,[~,~,~]_\partial, -Id_{L_1})$ is a regular Hom 3-Lie algebra.
\end{exam}

\begin{defn} \cite{P} \label{defin:homr}
A {\it representation of multiplicative Hom 3-Lie algebra} $(L,[~,~,~]_L,\alpha)$ over $F$ is a triple $(V,\phi,\rho)$, where $V$ is a vector space over $F$, $\rho: L\wedge L\rightarrow gl(V)$ and $\phi: V\rightarrow V$ are $F$-linear mappings satisfy that
 \begin{equation}\label{eq:hr1}
    \rho(\alpha(x_1),\alpha(x_2))\phi=\phi\rho(x_1,x_2),
\end{equation}
 \begin{equation}\label{eq:hr2}
\begin{split}
    \rho([x_1,x_2,x_3]_L,\alpha(x_4))\phi=&\rho(\alpha(x_1),\alpha(x_2))\rho(x_3,x_4)+\rho(\alpha(x_2),\alpha(x_3))\rho(x_1,x_4)\\
    +&\rho(\alpha(x_3),\alpha(x_1))\rho(x_2,x_4),
\end{split}
\end{equation}
 \begin{equation}\label{eq:hr3}
\begin{split}
    \rho(\alpha(x_1),\alpha(x_2))\rho(x_3,x_4)=&\rho(\alpha(x_3),\alpha(x_4))\rho(x_1,x_2)+\rho([x_1,x_2,x_3]_L,\alpha(x_4))\phi\\
    +&\rho(\alpha(x_3),[x_1,x_2,x_4]_L)\phi,  \forall x_i\in L, 1\leq i\leq 4.
\end{split}
\end{equation}
\end{defn}

It is clear that if $\phi=I_{d_V}$, then $(V,\rho)$ is  a representation of Hom 3-Lie algebra $(L,[~,~,~]_L,\alpha)$  defined in \cite{B7}.

\begin{remark}\label{rem:r1}
Thanks to  \eqref{eq:hr2}, \eqref{eq:hr3} is equivalent to,  $\forall x_i\in L$, $1\leq i\leq 4$,
 \begin{equation}\label{eq:hr4}
\begin{split}
0=&\rho(\alpha(x_1),\alpha(x_2))\rho(x_3,x_4)+\rho(\alpha(x_2),\alpha(x_3))\rho(x_1,x_4)\rho(\alpha(x_3),\alpha(x_1))\rho(x_2,x_4)\\
+&\rho(\alpha(x_3),\alpha(x_4))\rho(x_1,x_2)+\rho(\alpha(x_1),\alpha(x_4))\rho(x_2,x_3)+\rho(\alpha(x_2),\alpha(x_4))\rho(x_3,x_1)).
\end{split}
\end{equation}
\end{remark}

\begin{defn} \cite{H} \label{defin:hd}
Let $\phi:A\rightarrow A$ be an algebra endomorphism. A $\phi$-derivation on $A$ is an $F$-linear map $D: A\rightarrow A$ satisfying
\begin{equation}\label{eq:hd1}
    D(ab)=\phi(a)D(b)+D(a)\phi(b), ~~ \forall a,b\in A.
\end{equation}
 $Der_\phi(A)$ denotes the set of {\it $\phi$-derivations} on $A$.
\end{defn}

Since $\phi$ is an algebra endomorphism,  we have
\begin{equation}\label{eq:hd2}
    D(abc)=\phi(ab)D(c)+\phi(bc)D(a)+\phi(ac)D(b), ~~ \forall a,b,c\in A.
\end{equation}

\section{Basic structures of hom 3-Lie-Rinehart algenras}
\mlabel{sec:rbl3rbl}

\begin{defn}\label{defin:hl} Let $(L,[~,~,~]_L,\alpha)$ be a multiplicative Hom 3-Lie algebra, $\phi: A\rightarrow A$ be an algebra homomorphism and
$\rho: L\wedge L\rightarrow Der_\phi(A)$  be an $F$-linear map. If  $(A,\rho,\phi)$ is a representation of the Hom 3-Lie algebra $(L,[~,~,~]_L,\alpha)$, and $\forall a\in A, x, y, z\in L,$
\begin{equation}\label{eq:hom1}
\alpha(ax)=\phi(a)\alpha(x), \quad [x,y,az]_L=\phi(a)[x,y,z]_L+\rho(x,y)(a)\alpha(z),
 \end{equation}
then the tuple $(L,A,[~,~,~]_L,\phi,\alpha,\rho)$ is called  {\it a weak Hom 3-Lie-Rinehart algebra} over $(A,\phi)$.

Further, if $\rho$ satisfies that
 \begin{equation}\label{eq:amod}
 \rho(ax,y)=\rho(x,ay)=\phi(a)\rho(x,y), \forall a\in A, x,y\in L,
 \end{equation}
then $(L,A,[~,~,~]_L,\phi,\alpha,\rho)$ is called a {\it Hom 3-Lie-Rinehart algebra}.

If $\alpha$ and $\phi$ are algebra automorphisms, then $(L,A,[~,~,~]_L,\phi,\alpha,\rho)$ is called  {\it a regular  Hom 3-Lie-Rinehart algebra}.
\end{defn}

From Definition \ref{defin:hl}, we know that
 every 3-Lie-Rinehart algebra $(L, A,\rho)$ is a Hom 3-Lie-Rinehart algebra $(L,A,[~,~,~]_L,$ $ I_{d_A}, $ $I_{d_L}, \rho)$; and
every multiplicative Hom 3-Lie algebra $(L,$ $[~,~,~]_L,$ $\alpha)$ is a Hom 3-Lie-Rinehart algebra $(L,F, [~,~,~]_L, I_{d_F},$ $\alpha,0)$.

A Hom 3-Lie-Rinehart algebra  $(L,A,[~,~,~]_L,\phi,\alpha,\rho)$ is usually denoted  by $(L,A,\phi,\alpha,\rho)$ or $L$, if there is no confusion.

\begin{defn}\label{defn:subandideal} Let $(L,A,[~,~,~]_L,\phi,\alpha,\rho)$ be a Hom 3-Lie-Rinehart algebra.

  1)  If $S$ is a subalgebra of the Hom 3-Lie algebra $(L, [~,~,~]_L,\alpha)$ satisfying $AS\subset S$, then   $(S, A,\phi, \alpha|_S,$ $ \rho|_{S\wedge S})$ is a Hom 3-Lie-Rinehart algebra, which is called {\it a subalgebra of the Hom 3-Lie-Rinehart algebra $(L,A,\phi,\alpha,\rho)$.}

2) If $I$ is an ideal of the Hom 3-Lie algebra $(L,[~,~,~]_L, \alpha)$ and satisfies $AI\subset I$ and $\rho(I,L)(A)L\subset I$, then   $(I, A, \phi, \alpha|_I, \rho|_{I\wedge I})$ is a Hom 3-Lie-Rinehart algebra, which is called {\it an ideal of the Hom 3-Lie-Rinehart algebra $(L,A,\phi,\alpha,\rho)$.}

3) If a Hom 3-Lie-Rinehart algebra $(L,A,\phi,\alpha,\rho)$  cannot be decomposed into the direct sum of two nonzero ideals, then $L$ is called {\it an indecomposable Hom 3-Lie-Rinehart algebra}.
\end{defn}

\begin{prop}\label{prop:ker} If  $(L,A,\phi,\alpha,\rho)$ is  a regular Hom 3-Lie-Rinehart algebra, then $Ker\rho$ is an ideal.
\end{prop}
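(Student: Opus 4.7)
My plan is to verify each of the four clauses in Definition \ref{defn:subandideal}(2) for $I=\Ker\rho$: stability under $\alpha$, closure under the bracket $[I,L,L]_L\subseteq I$, closure under the $A$-action $AI\subseteq I$, and $\rho(I,L)(A)L\subseteq I$. The last is immediate, since $\rho(I,L)=0$ by the very definition of $\Ker\rho$. Closure under the $A$-action also costs nothing: for $x\in\Ker\rho$ and $a\in A$, the defining identity \eqref{eq:amod} for a Hom 3-Lie-Rinehart algebra gives $\rho(ax,y)=\phi(a)\rho(x,y)=0$ for every $y\in L$, hence $ax\in\Ker\rho$. So the real work concentrates on $\alpha$-invariance and bracket-closure, and both rely on the regularity hypothesis.

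For $\alpha$-invariance I will invoke \eqref{eq:hr1}. Given $x\in\Ker\rho$ and an arbitrary $y\in L$, regularity lets me write $y=\alpha(y')$; then
\[
\rho(\alpha(x),y)\circ\phi \;=\; \rho(\alpha(x),\alpha(y'))\circ\phi \;=\; \phi\circ\rho(x,y')\;=\;0.
\]
Since $\phi$ is surjective (being an automorphism), this forces $\rho(\alpha(x),y)=0$ on all of $A$, so $\alpha(x)\in\Ker\rho$.

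For bracket-closure I plan to apply \eqref{eq:hr2} with $x_1=x\in\Ker\rho$ and arbitrary $x_2,x_3,x_4\in L$. On the right-hand side the middle summand $\rho(\alpha(x_2),\alpha(x_3))\rho(x,x_4)$ vanishes because $x\in\Ker\rho$; the other two summands, $\rho(\alpha(x),\alpha(x_2))\rho(x_3,x_4)$ and $\rho(\alpha(x_3),\alpha(x))\rho(x_2,x_4)$, vanish by the $\alpha$-invariance just proved (together with the skew-symmetry of $\rho$ in its two arguments). Hence $\rho([x,x_2,x_3]_L,\alpha(x_4))\circ\phi=0$, and the surjectivity of $\phi$ and $\alpha$ (here regularity is again essential) lets me conclude $\rho([x,x_2,x_3]_L,y)=0$ for every $y\in L$, that is, $[x,x_2,x_3]_L\in\Ker\rho$.

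The only subtle point, and the main obstacle, is precisely this last passage: going from a vanishing of the form $\rho(\cdot,\alpha(x_4))\circ\phi=0$ to the unrestricted $\rho(\cdot,y)=0$ on $L$. This is where the assumption that $(L,A,\phi,\alpha,\rho)$ is \emph{regular} (i.e.\ $\alpha$ and $\phi$ are algebra automorphisms, not merely endomorphisms) enters decisively; without it, neither the reduction $y=\alpha(y')$ nor the cancellation of $\phi$ on the right is justified, and $\Ker\rho$ need not be $\alpha$-stable or bracket-closed.
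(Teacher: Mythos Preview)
Your proof is correct and follows essentially the same route as the paper's: $\alpha$-invariance via \eqref{eq:hr1}, bracket-closure via \eqref{eq:hr2}, and $A$-stability via \eqref{eq:amod}, with the surjectivity of $\alpha$ and $\phi$ (regularity) used to cancel the outer $\phi$ and to replace $\alpha(x_4)$ by an arbitrary $y$. Your treatment is in fact a bit more explicit than the paper's about where exactly regularity enters, and your observation that $\rho(I,L)(A)L\subseteq I$ is immediate from $\rho(I,L)=0$ is a slight simplification of the paper's corresponding step.
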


\begin{proof}
 Thank to \eqref{eq:hr1} and \eqref{eq:hr2}, for $\forall x\in Ker\rho$, $y,z,w\in L$,
 $$
    \rho([x,y,z]_L,\alpha(w))\phi=\rho(\alpha(x),\alpha(y))\rho(z,w)+\rho(\alpha(y),\alpha(z))\rho(x,w)+\rho(\alpha(z),\alpha(x))\rho(y,w)=0,
$$
$$
\rho(\alpha(x),\alpha(y))\phi=\phi\rho(x,y)=0.
$$
Therefore, $[x, L, L]_L\subseteq Ker\rho,$ and  $\alpha(Ker\rho)\subset Ker\rho$.
By \eqref{eq:amod}, for $\forall a\in A$,
$$\rho(ax,y)=\rho(x,ay)=\phi(a)\rho(x,y)=0, \quad\rho(\rho(x,y)az,w)=\phi(\rho(x,y)a)\rho(z,w)=0.$$
We get  $ax\in Ker\rho$, $\rho(x,y)az\in Ker\rho$, that is, $A Ker\rho\subset Ker\rho$, $\rho(Ker\rho,L)(A)L\subset Ker\rho$.

Therefore, $Ker\rho$ is an ideal of the regular Hom 3-Lie-Rinehart algebra $(L,A,\phi,\alpha,\rho)$.
\end{proof}

\begin{exam}\label{exam:4}
 Let $(L,A, [ \ ,\ ,\ ]_{\partial}, \rho_{\mbox{ad}})$ be the weak 3-Lie-Rinehart in Example \ref{exam:1}.  Define  $R$-linear maps
  $$  \alpha=-I_{d_L}: L\rightarrow L,~~\alpha(f)=-f,\quad
     \phi=I_{d_A}: A\rightarrow A,~~\phi(a)=a,
   $$
  \begin{equation}\label{eq:ex3}
\rho'=-\rho_{\mbox{ad}}: L\wedge L\rightarrow gl(A),~~\rho'(f,g)(a)=-\frac {\partial(f, g, a)}{\partial(x, y, z)}, \forall f, g\in L, a\in A.
\end{equation}
Then $(L, A, I_{d_A}, -I_{d_{L}}, \rho')$ is a regular weak Hom  3-Lie-Rinehart algebra, and $Ker\rho'=\langle 1\rangle$.
\end{exam}

\begin{proof} It is clear that $ \alpha=-I_{d_L}$ and $ \phi=I_{d_A}$ are algebra isomorphism, and  $\forall a\in A,f\in L$, $\alpha(af)=-af=\phi(a)\alpha(f).
$ Thanks to  \eqref{eq:ex3}, $\forall f,g\in L, a,b\in A,$
$ \rho'(f,g)(ab)=\phi(a)\rho'(f,g)(b)+\rho'(f,g)(a)\phi(b),$
it follows $\rho'(L, L)\subseteq Der_\phi(A)$.

Since $(L, [\ ,\ ,\ ]_{\partial})$ is a 3-Lie algebra, $\forall f_i\in L, 1\leq i\leq 4, a\in A$,

\vspace{2mm}$
 \rho'(\alpha(f_1),\alpha(f_2))\phi(a)=-[f_1,f_2,a]_\partial=\phi\rho'(f_1,f_2)(a),
$

\vspace{2mm}$\rho'([f_1,f_2,f_3]_\partial,\alpha(f_4))\phi(a)$

\vspace{2mm}\noindent $=\rho'(\alpha(f_1),\alpha(f_2))\rho'(f_3,f_4)(a)$
$+\rho'(\alpha(f_2),\alpha(f_3))\rho'(f_1,f_4)(a)+\rho'(\alpha(f_3),\alpha(f_1))\rho'(f_2,f_4)(a),
 $

 \vspace{2mm} $\rho'(\alpha(f_1),\alpha(f_2))\rho'(f_3,f_4)(a)-\rho'(\alpha(f_3),\alpha(f_4))\rho'(f_1,f_2)(a)
 $

 \vspace{2mm}\noindent $=\rho'([f_1,f_2,f_3]_\partial,\alpha(f_4))\phi(a)+\rho'(\alpha(f_3),[f_1,f_2,f_4]_\partial)\phi(a).
 $
\\It follows that $(A,\rho',\phi)$ is a representation of $(L,[~,~,~]_\partial,\alpha)$, and for $f\in L$, $\rho'(f, L)=0$ if and only if $f$ is constant.

Thanks to  \eqref{eq:f}, $\forall a\in A, f,g,h\in L$,
$$
[f,g,ah]_\partial=a[f,g,h]_\partial+[f,g,h]_\partial a=\phi(a)[f,g,h]_\partial+\rho'(f,g)(a)\alpha(h).
$$

Therefore, $(L, A,[\ ,\ ,\ ]_{\partial}, I_{d_A}, -I_{d_{L}}, \rho')$ is a  regular weak Hom 3-Lie-Rinehart algebra with  $Ker\rho'=\langle 1\rangle$.
\end{proof}

\begin{exam}\label{exam:5} $(L_1,A_1,[~,~,~]_\partial, I_{d_{A_1}}, -I_{d_{L_1}}, -\rho_{\mbox{ad}})$ is
a weak Hom 3-Lie-Rinehart algebras, where $(L_1,[~,~,~]_\partial, -I_{d_{L_1}})$ is a regular Hom 3-Lie algebra in Example \ref{exam:3}, and $A_1=L_1$ (as vector spaces);
 $(T,B,[~,~,~]_\partial, I_{d_{B}}, -I_{d_{T}},\rho'=\rho_{-\mbox{ad}})$ is a Hom 3-Lie-Rinehart algebras, where
$B$ and $T$ are in Example \ref{exam:2}.
\end{exam}

\begin{theorem}
Let  $(L,A,[~,~,~]_L,\phi,\alpha,\rho)$ be a Hom 3-Lie-Rinehart algebra. Then the following identities hold,  for all $a,b,c\in A, x_i\in L, 1\leq i\leq 5,$
\begin{equation}\label{eq:ho1}
\begin{split}
0&=\phi\rho(x_4,x_5)(a)\alpha([x_1,x_2,x_3]_L)+\phi\rho(x_5,x_3)(a)\alpha([x_1,x_2,x_4]_L)\\
 &+\phi\rho(x_3,x_4)(a)\alpha([x_1,x_2,x_5]_L)+\phi\rho(x_2,x_3)(a)\alpha([x_1,x_4,x_5]_L)\\
 &+\phi\rho(x_2,x_4)(a)\alpha([x_3,x_1,x_5]_L)+\phi\rho(x_2,x_5)(a)\alpha([x_2,x_4,x_1]_L),
 \end{split}
  \end{equation}

\begin{equation}\label{eq:ho2}
\begin{split}
0&=\phi^2(b)(\phi\rho(x_4,x_5)(a)\alpha([x_1,x_2,x_3]_L)+\phi\rho(x_5,x_3)(a)\alpha([x_1,x_2,x_4]_L)\\
&+\phi\rho(x_3,x_4)(a)\alpha([x_1,x_2,x_5]_L)+\phi\rho(x_3,x_1)(a)\alpha([x_2,x_4,x_5]_L)\\
&+\phi\rho(x_4,x_1)(a)\alpha([x_3,x_2,x_5]_L)+\phi\rho(x_5,x_1)(a)\alpha([x_3,x_4,x_2]_L)),
 \end{split}
  \end{equation}

\begin{equation}\label{eq:ho3}
\begin{split}
 0&=\phi^2(b)(\phi\rho(x_2,x_3)(a)\alpha([x_1,x_4,x_5]_L)+\phi\rho(x_2,x_4)(a)\alpha([x_3,x_1,x_5]_L)\\
 &+\phi\rho(x_2,x_5)(a)\alpha([x_2,x_4,x_1]_L)+\phi\rho(x_1,x_3)(a)\alpha([x_2,x_4,x_5]_L)\\
 &+\phi\rho(x_1,x_4)(a)\alpha([x_3,x_2,x_5]_L)+\phi\rho(x_1,x_5)(a)\alpha([x_3,x_4,x_2]_L)),
 \end{split}
  \end{equation}

\begin{equation}\label{eq:ho4}
0=\phi(\rho(x_{1},x_{2})(a)\rho(x_{3},x_{4})+\rho(x_{1},x_{4})(a)\rho(x_{2},x_{3})+\rho(x_{2},x_{4})(a)\rho(x_{3},x_{1}))(b)\alpha^2(x_5),
  \end{equation}

\begin{equation}\label{eq:ho5}
0=\phi^2(c)\phi(\rho(x_{1},x_{2})(a)\rho(x_{3},x_{4})+\rho(x_{2},x_{3})(a)\rho(x_{1},x_{4})+\rho(x_{3},x_{1})(a)\rho(x_{2},x_{4}))(b)\alpha^2(x_5),
  \end{equation}

\begin{equation}\label{eq:ho6}
\begin{split}
0&=\phi^2(c)\phi(\rho(x_{1},x_{4})(a)\rho(x_{2},x_{3})+\rho(x_{2},x_{4})(a)\rho(x_{3},x_{1})+\rho(x_{2},x_{3})(a)\rho(x_{4},x_{1})\\
&+\rho(x_{3},x_{1})(a)\rho(x_{4},x_{2}))(b)\alpha^2(x_5).
 \end{split}
  \end{equation}
\end{theorem}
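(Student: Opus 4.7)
The strategy behind all six identities is uniform: apply the Hom-Jacobi identity \eqref{eq:h1} (or one of the representation identities \eqref{eq:hr1}--\eqref{eq:hr3}) after replacing one or more of the arguments $x_i$ by $ax_i$ (respectively $a$ by $ab$ or $abc$), expand both sides using the Leibniz-type rule \eqref{eq:hom1} together with the algebra-homomorphism properties of $\phi$ and $\alpha$, and cancel the ``unperturbed'' contributions that reproduce the starting identity scaled by $\phi^2(a)$ (respectively $\phi^2(b)$ or $\phi^2(c)$). The surviving terms are exactly the six claimed identities.

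For \eqref{eq:ho1}, the plan is to substitute $x_5 \mapsto ax_5$, then $x_4 \mapsto ax_4$, then $x_3 \mapsto ax_3$ in the Hom-Jacobi identity \eqref{eq:h1}. Each substitution is expanded by means of $\alpha(ax_i) = \phi(a)\alpha(x_i)$ on outer arguments, $[x,y,ax_i]_L = \phi(a)[x,y,x_i]_L + \rho(x,y)(a)\alpha(x_i)$ on inner brackets, the multiplicativity $\alpha([x,y,z]_L) = [\alpha(x),\alpha(y),\alpha(z)]_L$, and the intertwining \eqref{eq:hr1} whenever a $\rho$ meets an $\alpha$. Each side then becomes a $\phi^2(a)$-multiple of \eqref{eq:h1} (which cancels) plus two types of correction: terms of the shape $\phi\rho(x_i,x_j)(a)\,\alpha([x_k,x_l,x_m]_L)$ and double-$\rho$ terms of the shape $\rho(\alpha(\cdot),\alpha(\cdot))\rho(\cdot,\cdot)(a)\,\alpha^2(x_i)$. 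The double-$\rho$ corrections vanish by the symmetric six-term form \eqref{eq:hr4} of the representation identity (Remark \ref{rem:r1}) applied to $a$, so only the $\phi\rho$-terms survive; collecting them across the three substitutions produces precisely the six summands of \eqref{eq:ho1}.

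The identities \eqref{eq:ho2} and \eqref{eq:ho3} are then obtained by repeating the same procedure with $a$ replaced by $ab$, and using the $\phi$-derivation rule \eqref{eq:hd1}, i.e.\ $\rho(x,y)(ab) = \phi(a)\rho(x,y)(b) + \phi(b)\rho(x,y)(a)$, to split each occurrence of $\rho(\cdot,\cdot)(ab)$. Subtracting the pieces proportional to $\phi^2(a)$ (or $\phi^2(b)$) times \eqref{eq:ho1} with a relabelled scalar isolates the genuinely ``mixed'' $\phi^2(b)\phi\rho(\cdot,\cdot)(a)$ terms; the two inequivalent index patterns of \eqref{eq:ho2} and \eqref{eq:ho3} arise from the two ways of grouping $ab$ across the $\rho$-slots versus the outer bracket. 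In the same spirit, \eqref{eq:ho4}--\eqref{eq:ho6} follow by applying the mechanism to the representation identity \eqref{eq:hr4} instead of to \eqref{eq:h1}: evaluate \eqref{eq:hr4} at $ab$ (for \eqref{eq:ho4}) or at $abc$ (for \eqref{eq:ho5} and \eqref{eq:ho6}), expand each $\rho(\cdot,\cdot)(ab)$ (or $(abc)$) via the $\phi$-derivation rule, cancel the pieces that reproduce \eqref{eq:hr4} at $a$, $b$ (and $c$) separately, and act on $\alpha^2(x_5)$. The different index patterns of \eqref{eq:ho5} versus \eqref{eq:ho6} record the two genuinely distinct groupings of the triple product.

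The main obstacle is bookkeeping rather than algebra. One must rewrite every $[x_i,x_j,ax_k]_L$ with the scaled argument in the third slot before applying \eqref{eq:hom1} (introducing sign factors from the skew-symmetry of the 3-bracket), and one must apply \eqref{eq:hr1} systematically to convert between $\rho(\alpha(\cdot),\alpha(\cdot))\phi$ and $\phi\rho(\cdot,\cdot)$ so that all surviving terms appear in the normal form of the right-hand sides of \eqref{eq:ho1}--\eqref{eq:ho6}. Since every manipulation is linear in the $A$-coefficients and multiplicative in $\alpha$, no algebraic input beyond \eqref{eq:h1}, \eqref{eq:hom1}, \eqref{eq:hr1}--\eqref{eq:hr3}, and the $\phi$-derivation rule \eqref{eq:hd1} is needed.
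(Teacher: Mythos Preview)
Your proposal is correct and follows essentially the same approach as the paper, which simply states that the identities ``can be verified by a direct computation according to \eqref{eq:h1} and Definition \ref{defin:hl}.'' Your write-up supplies the concrete mechanism behind that direct computation---substituting $ax_i$ (or $ab$, $abc$) into \eqref{eq:h1} and \eqref{eq:hr1}--\eqref{eq:hr4}, expanding via \eqref{eq:hom1} and \eqref{eq:hd1}, and cancelling the $\phi^2$-scaled copy of the original identity---so there is nothing to compare beyond the level of detail.
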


\begin{proof}
Eqs. \eqref{eq:ho1} - \eqref{eq:ho6} can be verified   by a direct computation according to \eqref{eq:h1} and Definition \ref{defin:hl}.
\end{proof}

\begin{theorem}
Let $(L, A,\rho)$ be a 3-Lie-Rinehart algebra, $\phi: A\rightarrow A$ and $\alpha: L\rightarrow L$ be algebra endomorphisms that  satisfy
 \begin{equation}\label{eq:ho7}
\rho(\alpha(x),\alpha(y))\phi=\phi\rho(x,y),~~ \alpha(ax)=\phi(a)\alpha(x), ~~\forall a\in A, x,y\in L.
 \end{equation}
  Then the tuple $(A,L,[~,~,~]_\alpha,\phi,\alpha,\rho_\phi)$ is a Hom 3-Lie-Rinehart algebra, where
\begin{equation}\label{eq:ho77}
[x,y,z]_\alpha:=\alpha([x,y,z]),~~ \quad \rho_\phi(x,y)(a):=\phi(\rho(x,y)(a)),  ~~ \forall x,y,z\in L, a\in A.
 \end{equation}
 \end{theorem}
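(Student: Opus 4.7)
The plan is to recognize this as a Yau-style twist of the given 3-Lie-Rinehart algebra along the compatible endomorphism pair $(\alpha,\phi)$, and to verify each clause of Definition~\ref{defin:hl} in turn. The two hypotheses in \eqref{eq:ho7} are precisely what is needed to propagate every axiom from $(L,A,\rho)$ to its twist, so the proof is a sequence of bookkeeping checks rather than a new idea.

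First I would show that $(L,[~,~,~]_\alpha,\alpha)$ is a multiplicative Hom 3-Lie algebra. Since $\alpha$ is an algebra endomorphism of $(L,[~,~,~])$, one has $\alpha([x,y,z]_\alpha)=[\alpha(x),\alpha(y),\alpha(z)]_\alpha$ immediately. For the Hom-Jacobi identity \eqref{eq:h1}, I would expand each twisted bracket as $\alpha$ composed with the original bracket, pull the outer $\alpha$ outside, and invoke the inner-derivation form of \eqref{eq:jacobi} applied to $\alpha(x_1),\dots,\alpha(x_5)$; multiplicativity of $\alpha$ then rewrites each inner bracket as the desired $\alpha$-twisted bracket.

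Next I would verify that $(A,\rho_\phi,\phi)$ is a representation of $(L,[~,~,~]_\alpha,\alpha)$ valued in $\Der_\phi(A)$. The $\phi$-derivation property of $\rho_\phi(x,y)$ on a product $ab$ is immediate from $\rho(x,y)\in\Der(A)$ and the fact that $\phi$ is an algebra homomorphism. For \eqref{eq:hr1}, the first half of \eqref{eq:ho7} yields $\rho_\phi(\alpha(x),\alpha(y))\phi=\phi\rho(\alpha(x),\alpha(y))\phi=\phi^2\rho(x,y)=\phi\rho_\phi(x,y)$. For \eqref{eq:hr2} and \eqref{eq:hr3}, writing $\rho_\phi=\phi\circ\rho$ and repeatedly applying \eqref{eq:ho7} transforms each identity into a ``$\phi^2$ times'' version of the corresponding identity \eqref{eq:mod2} or \eqref{eq:mod1} for the original 3-Lie-Rinehart algebra, which holds by hypothesis.

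Finally, the two compatibilities in \eqref{eq:hom1} follow quickly: the first is literally the second half of \eqref{eq:ho7}, while for the second I would compute
\[
[x,y,az]_\alpha=\alpha([x,y,az])=\alpha\bigl(a[x,y,z]+\rho(x,y)(a)z\bigr)
\]
using \eqref{eq:Rinhart1}, then apply $\alpha(au)=\phi(a)\alpha(u)$ termwise to obtain $\phi(a)[x,y,z]_\alpha+\rho_\phi(x,y)(a)\alpha(z)$. To upgrade to the full (not merely weak) Hom 3-Lie-Rinehart structure, \eqref{eq:amod} for $\rho_\phi$ follows from \eqref{eq:Rinhart2} for $\rho$ together with multiplicativity of $\phi$. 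The main obstacle is not conceptual but bookkeeping: once one notices that the substitution $\rho_\phi=\phi\circ\rho$ combined with \eqref{eq:ho7} simply inserts harmless powers of $\phi$ (and $\alpha$) into every axiom, the theorem reduces to the already-established structure of $(L,A,\rho)$.
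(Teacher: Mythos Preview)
Your proposal is correct and follows essentially the same approach as the paper: both verify the multiplicative Hom 3-Lie structure via the Yau twist, check that $\rho_\phi$ lands in $\Der_\phi(A)$ and satisfies \eqref{eq:hr1}--\eqref{eq:hr3} by reducing (via \eqref{eq:ho7}) to $\phi^2$ times the original module identities, and deduce \eqref{eq:amod} from \eqref{eq:Rinhart2}. Your write-up is in fact slightly more complete than the paper's, since you explicitly verify the second identity in \eqref{eq:hom1}, which the paper leaves implicit.
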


\begin{proof} Since $\alpha$ is an algebra homomorphism, by \cite{P}, $(L, [\ ,\ ,\ ]_{\alpha}, \alpha)$ is   a multiplicative Hom 3-Lie algebra.
Thanks to \eqref{eq:ho77},  $\forall x_1, x_2\in L, a,b\in A$,
$$\rho_\phi(x_1,x_2)(ab)=\phi(\rho(x_1,x_2)(ab))=\rho_\phi(x_1,x_2)(a)\phi(b)+\phi(a)\rho_\phi(x_1,x_2)(b),$$
$$\rho_\phi(ax_1,x_2)(b)=\phi(a\rho(x_1,x_2)(b))=\phi(a)\phi(\rho(x_1,x_2)(b))=\phi(a)\rho_\phi(x_1,x_2)(b),$$
$$\rho_\phi(\alpha(x_1),\alpha(x_2))\phi=\phi(\rho(\alpha(x_1),\alpha(x_2))\phi)=\phi(\phi\rho(x_1,x_2))=\phi\rho_\phi(x_1,x_2).$$
Then $\rho_\phi(L, L)\subseteq Der_\phi(A)$, and
\begin{equation*}
\begin{split}
&\rho_\phi(\alpha(x_1),\alpha(x_2))\rho_\phi(x_3,x_4)+\rho_\phi(\alpha(x_2),\alpha(x_3))\rho_\phi(x_1,x_4)+\rho_\phi(\alpha(x_3),\alpha(x_1))\rho_\phi(x_2,x_4)\\
=&\phi\rho(\alpha(x_1),\alpha(x_2))(\phi\rho(x_3,x_4))+\phi\rho(\alpha(x_2),\alpha(x_3))(\phi\rho(x_1,x_4))\\
+&\phi\rho(\alpha(x_3),\alpha(x_1))(\phi\rho(x_2,x_4))=\rho_\phi([x_1,x_2,x_3]_\alpha,\alpha(x_4))\phi,
\end{split}
\end{equation*}
\begin{equation*}
\begin{split}
0=&\rho_\phi(\alpha(x_1),\alpha(x_2))\rho_\phi(x_3,x_4)+\rho_\phi(\alpha(x_2),\alpha(x_3))\rho_\phi(x_1,x_4)+\rho_\phi(\alpha(x_3),\alpha(x_1))\rho_\phi(x_2,x_4)\\
+&\rho_\phi(\alpha(x_3),\alpha(x_4))\rho_\phi(x_1,x_2)+\rho_\phi(\alpha(x_1),\alpha(x_4))\rho_\phi(x_2,x_3)+\rho_\phi(\alpha(x_2),\alpha(x_4))\rho_\phi(x_3,x_1).\\
\end{split}
\end{equation*}
It follows that  $(A,\rho_\phi,\phi)$ is a representation of $(L,[~,~,~]_\alpha,\alpha)$, and $(L,A,[~,~,~]_\alpha,\phi,\alpha,\rho_\phi)$ is a Hom 3-Lie-Rinehart algebra.
\end{proof}

\begin{theorem}
Let $(L,[~,~,~]_L,\alpha)$ be a multiplicative Hom 3-Lie algebra and $(A,\rho,\phi)$ be a representation of $(L,[~,~,~]_L,\alpha)$, where $\rho(L, L)\subseteq Der_\phi(A)$ and $\phi$ is an algebra homomorphism. Then $(G,A,[~,~,~]_G,\phi,\tilde{\alpha},\tilde{\rho})$ is a Hom 3-Lie-Rinehart algebra, where $G=A\otimes L=\{ax~|~a\in A,x\in L\}$, and $\forall x_i\in L$, $a_i\in A$, $i=1,2,3$,
\begin{equation}\label{eq:ho8}
\begin{split}
[a_1x_1,a_2x_2,a_3x_3]_G:=&\phi(a_1a_2a_3)[x_1,x_2,x_3]_L+\phi(a_1a_2)\rho(x_1,x_2)(a_3)\alpha(x_3)\\
+&\phi(a_2a_3)\rho(x_2,x_3)(a_1)\alpha(x_1)+\phi(a_1a_3)\rho(x_3,x_1)(a_2)\alpha(x_2),
\end{split}
\end{equation}
\begin{equation}\label{eq:ho9}
\tilde{\alpha}:G\rightarrow G, \quad \tilde{\alpha}(ax)=\phi(a)\alpha(x),~~\forall a\in A, x\in L,
\end{equation}
\begin{equation*}
\tilde{\rho}:G\wedge G\rightarrow gl(A), \quad \tilde{\rho}(a_1x_1,a_2x_2)=\phi(a_1a_2)\rho(x_1,x_2),~~\forall a_1,a_2,\in A, x_1,x_2\in L.
\end{equation*}
\end{theorem}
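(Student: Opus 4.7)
The plan is to verify each clause of Definition \ref{defin:hl} for $(G,A,[~,~,~]_G,\phi,\tilde{\alpha},\tilde{\rho})$, handling the Hom 3-Lie structure on $G$ first and then the $A$-compatibility and the representation axioms for $(A,\tilde{\rho},\phi)$. First I would observe that the trilinear bracket $[~,~,~]_G$ on $G=A\otimes L$ is alternating: permuting two of the three arguments $a_ix_i$ in \eqref{eq:ho8} reverses a pair of variables in $[x_1,x_2,x_3]_L$ (which is alternating in $L$) and simultaneously relabels the three $\rho$-terms so that each picks up the expected sign from the antisymmetry of $\rho$. Multiplicativity $\tilde{\alpha}([X_1,X_2,X_3]_G)=[\tilde{\alpha}(X_1),\tilde{\alpha}(X_2),\tilde{\alpha}(X_3)]_G$ then follows from $\phi$ being an algebra homomorphism on $A$, from $\alpha$ being an algebra morphism on $L$, and from the intertwining $\rho(\alpha(x),\alpha(y))\phi=\phi\rho(x,y)$ supplied by \eqref{eq:hr1}.

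The technical core is verifying the Hom-Jacobi identity \eqref{eq:h1} for $[~,~,~]_G$. Substituting $X_i=a_ix_i$, each side expands into three distinct families of terms classified by their $L$-valued factor: terms proportional to $\alpha^2\bigl([x_1,x_2,[x_3,x_4,x_5]_L]_L\bigr)$, terms of the form $\phi(\cdots)\rho(\cdot,\cdot)(\cdot)\,\alpha\bigl([\cdot,\cdot,\cdot]_L\bigr)$, and terms of the form $\phi(\cdots)\rho(\cdot,\cdot)\rho(\cdot,\cdot)(\cdot)\,\alpha^2(x_k)$. I would cancel the three families separately: the first collapses by the Hom-Jacobi identity for $(L,[~,~,~]_L,\alpha)$; the second by the identities \eqref{eq:ho1}-\eqref{eq:ho3} already established for Hom 3-Lie-Rinehart algebras, together with the $\phi$-derivation rule \eqref{eq:hd2} applied to $\rho(x_i,x_j)(a_ka_la_m)$; and the third by the representation axiom \eqref{eq:hr2} together with its symmetric form \eqref{eq:hr4}, using again that each $\rho(x_i,x_j)$ acts as a $\phi$-derivation. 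I expect this bookkeeping, keeping track of the six coefficient products $\phi(a_ia_ja_k)$, $\phi(a_ia_j)\rho(\cdot,\cdot)(a_k)$, and their permutations, to be the principal obstacle.

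The remaining axioms are direct. The equality $\tilde{\alpha}(aX)=\phi(a)\tilde{\alpha}(X)$ is immediate from \eqref{eq:ho9} and the homomorphism property of $\phi$. The Leibniz-type identity $[X_1,X_2,aX_3]_G=\phi(a)[X_1,X_2,X_3]_G+\tilde{\rho}(X_1,X_2)(a)\tilde{\alpha}(X_3)$ is obtained by writing $aX_3=(aa_3)x_3$, substituting into \eqref{eq:ho8}, and splitting each occurrence of $\rho(\cdot,\cdot)(a a_i\cdots)$ via the $\phi$-derivation rules \eqref{eq:hd1}-\eqref{eq:hd2}; the extra Leibniz contributions assemble to $\tilde{\rho}(X_1,X_2)(a)\tilde{\alpha}(X_3)$ while the remaining terms match $\phi(a)[X_1,X_2,X_3]_G$ after pulling $\phi(a)$ out of every coefficient. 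The $A$-linearity $\tilde{\rho}(aX,Y)=\tilde{\rho}(X,aY)=\phi(a)\tilde{\rho}(X,Y)$ is visible from the definition $\tilde{\rho}(a_1x_1,a_2x_2)=\phi(a_1a_2)\rho(x_1,x_2)$ together with the multiplicativity of $\phi$, and the same formula shows $\tilde{\rho}(G,G)\subseteq \Der_\phi(A)$ since $\phi(a_1a_2)$ multiplies a $\phi$-derivation on $A$ and hence remains a $\phi$-derivation. Finally, to show that $(A,\tilde{\rho},\phi)$ represents $(G,[~,~,~]_G,\tilde{\alpha})$ in the sense of Definition \ref{defin:homr}, I would substitute $X_i=a_ix_i$ into the axioms \eqref{eq:hr1}-\eqref{eq:hr3}: factoring the $\phi$-scalars out of each $\tilde{\rho}$ and using that $\phi$ is an algebra homomorphism reduces each identity to the corresponding axiom for $(A,\rho,\phi)$ as a representation of $L$, with the $\phi$-weighted $A$-coefficients matching on the two sides.
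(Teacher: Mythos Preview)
Your outline matches the paper's approach, which is in fact terser: the paper simply declares that $(G,[~,~,~]_G,\tilde{\alpha})$ is a Hom 3-Lie algebra and that $\tilde{\rho}(G,G)\subseteq Der_\phi(A)$, verifies multiplicativity of $\tilde{\alpha}$ via \eqref{eq:hr1}, \eqref{eq:ho8}, \eqref{eq:ho9}, cites \eqref{eq:ho4}, \eqref{eq:ho5} together with Definition~\ref{defin:homr} for the representation property of $(A,\tilde{\rho},\phi)$, and then checks $\tilde{\alpha}(b'\cdot bx)=\phi(b')\tilde{\alpha}(bx)$, the $A$-linearity of $\tilde{\rho}$, and the Leibniz rule exactly as you propose. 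One small caution: invoking \eqref{eq:ho1}--\eqref{eq:ho3} for the ``second family'' is formally circular (those identities were derived assuming a Hom 3-Lie-Rinehart structure already in place), but the paper's appeal to \eqref{eq:ho4}--\eqref{eq:ho5} has the same character, and in both cases the needed cancellations follow directly from \eqref{eq:hr1}, \eqref{eq:hr2}, \eqref{eq:hr4} and the $\phi$-derivation rules \eqref{eq:hd1}--\eqref{eq:hd2}.
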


\begin{proof}
It is apparently that $(G,[~,~,~]_G,\tilde{\alpha})$ is a Hom 3-Lie algebra and $\tilde{\rho}(G, G)\subseteq Der_{\phi}(A)$.

 By Eqs. \eqref{eq:hr1}, \eqref{eq:ho8} and \eqref{eq:ho9},  $\forall x_i\in L$, $a_i\in A$, $i=1,2,3$,
\begin{equation*}
\tilde{\alpha}([a_1x_1,a_2x_2,a_3x_3]_G)=[\phi(a_1)\alpha(x_1),\phi(a_2)\alpha(x_2),\phi(a_3)\alpha(x_3)]_G=[\tilde{\alpha}(a_1x_1),\tilde{\alpha}(a_2x_2),\tilde{\alpha}(a_3x_3)]_G.
\end{equation*}
Then $(G,[~,~,~]_G,\tilde{\alpha})$ is a multiplicative Hom 3-Lie algebra. Thanks to \eqref{eq:ho4}, \eqref{eq:ho5} and Definition \ref{defin:homr}, $(A,\tilde{\rho},\phi)$ is a representation of $(G,[~,~,~]_G,\tilde{\alpha})$.

By \eqref{eq:ho8} and \eqref{eq:ho9}, $\forall a,b,b',c, a_i\in A$, $x,y, x_i\in L$, $i=1,2,3$,

\vspace{2mm}$
\tilde{\alpha}(b'(bx))=\phi(b'b)\alpha(x)=\phi(b')\tilde{\alpha}(bx),
$

\vspace{2mm}
$\tilde{\rho}(ax, b'(by))(c)=\phi(b')\phi(ab)\rho(x,y)(c)=\tilde{\rho}(b'(ax),by)(c)=\phi(b')\tilde{\rho}(ax,by)(c),$

\vspace{2mm}$
[a_1x_1,a_2x_2,b\cdot a_3x_3]_G=\phi(b)[a_1x_1,a_2x_2,a_3x_3]_G+\tilde{\rho}(a_1x_1,a_2x_2)(b)\tilde{\alpha}(a_3x_3).
$
\\
Therefore, $(G,A,[~,~,~]_G,\phi,\tilde{\alpha},\tilde{\rho})$ is a Hom 3-Lie-Rinehart algebra.
\end{proof}

\section{ Split Regular Hom 3-Lie-Rinehart algebras}

In this section, we  study a class of  Hom 3-Lie-Rinehart algebras.

\begin{defn}\label{defin:2}
Let $(L,A,[~,~,~]_L,\phi,\alpha,\rho)$ be a regular Hom 3-Lie-Rinehart algebra. If there exist a maximal abelian subalgebra $H$ of $L$ satisfying that $\alpha(H)=H$ and
 \begin{equation}\label{eq:rootdecom}
 L=H\oplus \bigoplus \limits_{\gamma\in \Gamma}L_\gamma, ~~\mbox{where}~~  \Gamma=\{\gamma\in (H\wedge H)^*_{\neq 0}~~|~~L_\gamma\neq 0\},
 \end{equation}
 \begin{equation}\label{eq:s1}
  L_\gamma=\{x\in L~|~[h_1,h_2,x]_L=\gamma(h_1,h_2)\alpha(x),\forall h_1,h_2\in H\}, ~~ L_0=H;
  \end{equation}
  \begin{equation}\label{eq:adecom}
A=A_0\oplus \bigoplus \limits_{\lambda\in \Lambda}A_\lambda, ~~\mbox{where}~~   \Lambda=\{\lambda\in (H\wedge H)^*_{\neq 0}~|~A_\lambda\neq 0\},
\end{equation}
\begin{equation}\label{eq:ss}
  A_\lambda=\{a\in A~|~\rho(h_1,h_2)(a)=\lambda(h_1,h_2)\phi(a),\forall h_1,h_2\in H\}, ~  A_0=\{a\in A~|~\rho(H,H)(a)=0\},
  \end{equation}
then $(L,A,[~,~,~]_L,\phi,\alpha,\rho)$ is called a {\it split regular Hom 3-Lie-Rinehart algebra},   and $H$ is {\it a splitting Cartan subalgebra of $L$}, $\Gamma$ and $\Lambda$ are called the {\it root system} of  $L$, and the {\it weight system} of  $A$ associated to $H$, respectively.
\end{defn}

For convenience, in the rest of the paper, the  split regular Hom 3-Lie-Rinehart algebra $(L,A,[~,~,~]_L,\phi,\alpha,\rho)$ is simply denoted by  $(L,A)$, and denote
$$\quad -\Gamma=\{-\gamma~|~\gamma\in \Gamma\}, \quad -\Lambda=\{-\lambda~|~\lambda\in \Lambda\}, \quad \pm\Gamma=\Gamma\cup -\Gamma, \quad \pm\Lambda=\Lambda\cup -\Lambda.$$

\begin{exam}\label{exam:10}
Let $T'=\langle xe^{kz},ye^{kz},1~|~~ k\in Z\rangle\subseteq C^{\infty}(R^3),$ $B'=\langle e^{kz}~|~k\in Z \rangle\subseteq C^{\infty}(R^1)$. By a similar discussion to Example \ref{exam:4},
we find $(T',B',[~,~,~]_\partial,I_{d_{B'}},-I_{d_{T'}}, \rho_{-\mbox{ad}})$ is a  regular Hom 3-Lie-Rinehart algebra.
\end{exam}
\begin{proof}
Let $H=\langle x,y,1\rangle$ be the subspace of $T'$.

Thanks to \eqref{eq:f}, $[H, H, H]_{\partial}=0$, and for $\forall h_1=m_1x+n_1y+c_1$, $h_2=m_2x+n_2y+c_2\in H$ and $\xi_1=l_1xe^{kz}\in T'$, $\xi_2=l_2ye^{kz}\in T'$,
$u=se^{kz}\in B'$, $m_i, n_i, c_i, l_i, s\in R,$ $i=1, 2$,
$$
[h_1, h_2, \xi_1]_\partial=\left|
            \begin{array}{ccc}
              m_1 & n_1 & 0 \\
              m_2 & n_2 & 0 \\
              1 & 0 & l_1kx \\
            \end{array}
          \right|e^{kz}
          =k (m_1n_2-m_2n_1)\xi_1=k (m_2n_1-m_1n_2)(-I_{d_{T'}}(\xi_1)),$$
$$
[h_1, h_2, \xi_2]_\partial=\left|
            \begin{array}{ccc}
              m_1 & n_1 & 0 \\
              m_2 & n_2 & 0 \\
              0 & 1 & l_2ky \\
            \end{array}
          \right|e^{kz}
          =k (m_1n_2-m_2n_1)\xi_2=k (m_2n_1-m_1n_2)(-I_{d_{T'}}(\xi_2)),$$
$$\rho_{-\mbox{ad}}(h_1, h_2)u=-[h_1, h_2, u]_\partial=-k (m_1n_2-m_2n_1)u=k (m_2n_1-m_1n_2)I_{d_{B'}}(u).$$
Therefore, for $\xi_i\in T'$, $i=1, 2$, $~~ ~[H, H, \xi_i]_\partial=0 ~~\mbox{iff}~~ k=0$, that is, $[H, H, \xi_i]_\partial=0$ if and only if $\xi_i\in H,$
and $$ T'=H\oplus \bigoplus \limits_{k\in Z\setminus\{0\}}T_{\gamma_k}', \quad  B'=B_{0}'\oplus \bigoplus \limits_{k\in Z\setminus \{0\}}B_{\lambda_k}',~~ B_{0}'=R,$$
where for $k\in Z_{\neq 0},$ $T_{\gamma_k}'=\langle xe^{kz}, ye^{kz}\rangle, ~~ B_{\lambda_k}'=\langle e^{kz}\rangle$.
It follows that  $H$ is a   splitting Cartan subalgebra,
the  root system and the  weight system associated to $H$ are
$$  \Gamma=\{\gamma_k\in (H\wedge H)^*_{\neq 0}~~|~~k\in Z_{\neq 0}, \gamma_k(m_1x+n_1y+c_1, m_2x+n_2y+c_2)=k(m_2n_1-m_1n_2)\},$$
$$  \Lambda=\{\lambda_k\in (H\wedge H)^*_{\neq 0}~~|~~k\in Z_{\neq 0}, \lambda_k(m_1x+n_1y+c_1, m_2x+n_2y+c_2)=k(m_2n_1-m_1n_2)\}.$$
\end{proof}

\begin{theorem}\label{thm:1}
To Definition \ref{defin:2}, for any $\gamma,\gamma_1,\gamma_2,\gamma_3\in \Gamma\cup \{0\}$ and $\lambda,\lambda_1,\lambda_2\in \Lambda\cup \{0\}$, the following assertions hold:

$1)$  if $\lambda\in \Lambda$, then for $\forall k\in Z$, $\lambda(\alpha^k, \alpha^k)\in \Lambda$, and  $\phi^k(A_\lambda)= A_{\lambda(\alpha^{-k},\alpha^{-k})}$;

$2)$  if $\gamma\in \Gamma$, then for $\forall k\in Z$, $\gamma(\alpha^k, \alpha^k)\in \Gamma$, and $\alpha^k(L_\gamma)= L_{\gamma(\alpha^{-k},\alpha^{-k})}$;

$3)$ if $[L_{\gamma_1},L_{\gamma_2},L_{\gamma_3}]_L\neq 0$, then $(\gamma_1+\gamma_2+\gamma_3)(\alpha^{-1},\alpha^{-1})\in \Gamma\cup \{0\}$ and $$[L_{\gamma_1},L_{\gamma_2},L_{\gamma_3}]_L\subset L_{(\gamma_1+\gamma_2+\gamma_3)(\alpha^{-1}, \alpha^{-1})};$$

$4)$  if $A_{\lambda_1} A_{\lambda_2}\neq 0$, then $\lambda_1+\lambda_2\in \Lambda\cup \{0\}$ and $A_{\lambda_1} A_{\lambda_2}\subset A_{\lambda_1+\lambda_2}$;

$5)$ if $A_{\lambda} L_{\gamma}\neq 0$, then $\lambda+\gamma\in \Gamma\cup \{0\}$ and $A_{\lambda} L_{\gamma}\subset L_{\lambda+\gamma}$;

$6)$ if $\rho(L_{\gamma_1},L_{\gamma_2})(A_{\lambda})\neq 0$, then $(\gamma_1+\gamma_2+\lambda)(\alpha^{-1}, \alpha^{-1})\in \Lambda\cup \{0\}$ and $$\rho(L_{\gamma_1},L_{\gamma_2})(A_{\lambda})\subset A_{(\gamma_1+\gamma_2+\lambda)(\alpha^{-1}, \alpha^{-1})};$$
where $\forall \gamma \in \Gamma\cup \{0\}$ and $\forall \lambda \in \Lambda\cup \{0\}$,  and $h_1, h_2\in H$,
$$\gamma(\alpha^{-1},\alpha^{-1})(h_1, h_2)=\gamma(\alpha^{-1}(h_1),\alpha^{-1}(h_2)),\quad \lambda(\alpha^{-1},\alpha^{-1})(h_1, h_2)=\lambda(\alpha^{-1}(h_1),\alpha^{-1}(h_2)).$$
\end{theorem}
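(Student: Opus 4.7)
All six parts share a single template: take an element of a given eigenspace, apply the relevant structure map, and read off the resulting eigenvalue directly from the Hom 3-Lie-Rinehart axioms. The regularity hypothesis is essential here, because it gives bijectivity of $\alpha$ and $\phi$ and ensures $\alpha(H)=H$ implies $\alpha^{-1}(H)=H$, which is what makes the shifts by $\alpha^{\pm 1}$ in the claimed eigenvalues meaningful. The main tools will be the intertwining relation \eqref{eq:hr1}, the Hom-Jacobi identity \eqref{eq:h1}, the two compatibilities in \eqref{eq:hom1}, the representation identity \eqref{eq:hr3}, and the fact that $\rho(H,H)\subseteq Der_{\phi}(A)$.

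For (1) and (2), I reduce to the case $k=\pm 1$ and iterate. In (1), given $a\in A_\lambda$ and $h_1,h_2\in H$, evaluating \eqref{eq:hr1} at $x_i=\alpha^{-1}(h_i)$ yields $\rho(h_1,h_2)\phi(a)=\phi\rho(\alpha^{-1}(h_1),\alpha^{-1}(h_2))(a)=\lambda(\alpha^{-1},\alpha^{-1})(h_1,h_2)\,\phi(\phi(a))$, so $\phi(a)\in A_{\lambda(\alpha^{-1},\alpha^{-1})}$. In (2), regularity of $\alpha$ as a Hom 3-Lie algebra automorphism gives $[h_1,h_2,\alpha(x)]_L=\alpha[\alpha^{-1}(h_1),\alpha^{-1}(h_2),x]_L$, into which the eigenvalue relation for $x\in L_\gamma$ plugs directly. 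Iterating $\phi^{\pm 1}$ and $\alpha^{\pm 1}$ handles every $k\in Z$, their bijectivity upgrades each containment to an equality, and nonvanishing of the image spaces places $\lambda(\alpha^k,\alpha^k)\in\Lambda$ and $\gamma(\alpha^k,\alpha^k)\in\Gamma$.

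Parts (3)--(5) are essentially one-line computations. For (3), substitute $x_1,x_2:=\alpha^{-1}(h_1),\alpha^{-1}(h_2)$ and $x_3,x_4,x_5:=y_1,y_2,y_3$ (with $y_i\in L_{\gamma_i}$) into the Hom-Jacobi identity \eqref{eq:h1}; the three resulting summands on the right factor as $\gamma_i(\alpha^{-1},\alpha^{-1})(h_1,h_2)$ times $[\alpha(y_1),\alpha(y_2),\alpha(y_3)]_L=\alpha[y_1,y_2,y_3]_L$, and their sum equals $[h_1,h_2,[y_1,y_2,y_3]_L]_L$ on the left. For (4), the $\phi$-derivation identity together with multiplicativity of $\phi$ gives $\rho(h_1,h_2)(ab)=(\lambda_1+\lambda_2)(h_1,h_2)\phi(ab)$. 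For (5), apply the second relation of \eqref{eq:hom1} to $[h_1,h_2,ax]_L$, substitute the eigenvalues for $x$ and $a$, and recombine using the first relation $\alpha(ax)=\phi(a)\alpha(x)$.

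Part (6) is the longest and is where I expect the only real bookkeeping difficulty. The plan is to specialise \eqref{eq:hr3} at $x_1,x_2:=\alpha^{-1}(h_1),\alpha^{-1}(h_2)$ and $x_3,x_4:=y_1,y_2$, expressing the operator $\rho(h_1,h_2)\rho(y_1,y_2)$ as the sum of $\rho(\alpha(y_1),\alpha(y_2))\rho(\alpha^{-1}(h_1),\alpha^{-1}(h_2))$ and two terms involving the brackets $[\alpha^{-1}(h_1),\alpha^{-1}(h_2),y_i]_L$. Evaluating at $a\in A_\lambda$ and collapsing each summand using the eigenvalue relations for $a$, $y_1$, $y_2$ together with \eqref{eq:hr1} produces three scalar multiples of $\phi\rho(y_1,y_2)(a)$, with respective coefficients $\lambda(\alpha^{-1},\alpha^{-1})(h_1,h_2)$, $\gamma_1(\alpha^{-1},\alpha^{-1})(h_1,h_2)$, $\gamma_2(\alpha^{-1},\alpha^{-1})(h_1,h_2)$. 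Summing yields $(\gamma_1+\gamma_2+\lambda)(\alpha^{-1},\alpha^{-1})(h_1,h_2)\,\phi(\rho(y_1,y_2)(a))$, which is the desired eigenvalue identity. The only delicate point is keeping track of the $\alpha^{-1}$-shifts, which is exactly what the hypothesis $\alpha(H)=H$ makes legal.
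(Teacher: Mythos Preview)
Your proposal is correct and follows essentially the same approach as the paper: in each part you substitute $\alpha^{-1}(h_i)$ for the ``outer'' arguments and invoke exactly the same axiom the paper does (\eqref{eq:hr1} for parts~1--2, the Hom--Jacobi identity \eqref{eq:h1} for part~3, the $\phi$-derivation rule for part~4, \eqref{eq:hom1} for part~5, and \eqref{eq:hr3} together with \eqref{eq:hr1} for part~6). The only cosmetic difference is in upgrading the inclusions of parts~1--2 to equalities: the paper picks an arbitrary element of the target space and exhibits a preimage, whereas you run the same one-step argument for $\phi^{-1}$ (resp.\ $\alpha^{-1}$) and combine the two containments; both are equally valid.
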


\begin{proof} Thanks to \eqref{eq:hr1} and \eqref{eq:ss}, $\forall a\in A_\lambda$, $h_1,h_2\in H$,
\begin{equation*}
\begin{split}
 \rho(h_1,h_2)(\phi(a))=&\rho(\alpha\alpha^{-1}(h_1),\alpha\alpha^{-1}(h_2))\phi(a) =\phi\rho(\alpha^{-1}(h_1),\alpha^{-1}(h_2))(a)\\
 =&\phi(\lambda(\alpha^{-1}(h_1),\alpha^{-1}(h_2))\phi(a)) =\lambda(\alpha^{-1}(h_1),\alpha^{-1}(h_2))\phi(\phi(a)).
\end{split}
\end{equation*}
We get $\lambda(\alpha^{-1},\alpha^{-1})\in \Lambda$, and $\phi(A_\lambda)\subset A_{\lambda(\alpha^{-1},\alpha^{-1})}$.
 By induction on $k$, suppose that
 \\
 $\phi^{k-1}(A_\lambda)\subseteq $ $ A_{\lambda(\alpha^{1-k},\alpha^{1-k})}$, that is,
 $\rho(h_1,h_2)(\phi^{k-1}(a))$ $=\lambda(\alpha^{1-k}(h_1),$ $\alpha^{1-k}(h_2))\phi(\phi^{k-1}(a)).$
Then
\begin{equation*}
\begin{split}
 \rho(h_1,h_2)(\phi^k(a))=&\rho(\alpha\alpha^{-1}(h_1),\alpha\alpha^{-1}(h_2))\phi(\phi^{k-1}(a)) =\phi(\rho(\alpha^{-1}(h_1),\alpha^{-1}(h_2))\phi^{k-1}(a))\\
 =&\phi(\lambda(\alpha^{-k}(h_1),\alpha^{-k}(h_2))\phi^k(a)) =\lambda(\alpha^{-k},\alpha^{-k})(h_1,h_2)\phi(\phi^k(a)).
\end{split}
\end{equation*}
Therefore $\phi^k(A_\lambda)\subseteq A_{\lambda(\alpha^{-k},\alpha^{-k})}$, for $k\in Z$.

Similarly,  by \eqref{eq:hr1} and induction on $k$,  we have
$$\rho(\alpha^k(h_1),\alpha^k(h_2))\phi=\phi^k\rho(h_1,h_2)\phi^{-k+1},\forall k\in Z, h_1,h_2\in H.$$
Thanks to  \eqref{eq:ss}, $\forall h_1,h_2\in H$, $a\in A_{\lambda(\alpha^{-k},\alpha^{-k})}$,  we have
\begin{equation}\label{eq:hh}
\rho(h_1,h_2)\phi^{-k}=\phi^{-k} \rho(\alpha^k(h_1),\alpha^k(h_2)),
\rho(h_1,h_2)(a)=\lambda(\alpha^{-k}(h_1),\alpha^{-k}(h_2))\phi(a).
\end{equation}
Suppose  $\phi^k(b)=a$ for some $b\in A$. By \eqref{eq:hh},
\begin{equation*}
\begin{split}
 \rho(h_1,h_2)(b)=\rho(h_1,h_2)\phi^{-k}(a)=\phi^{-k} \rho(\alpha^k(h_1),\alpha^k(h_2))(a)=\phi^{-k} (\lambda(h_1,h_2)\phi(a))=\lambda(h_1,h_2)\phi(b).
\end{split}
\end{equation*}
It follows $b\in A_\lambda$, and $A_{\lambda(\alpha^{-k},\alpha^{-k})}\subseteq  \phi^k(A_\lambda)$, for $\forall k\in Z$. The result $1)$ follows.

Thanks to Definition \ref{defin:2}, for  $\forall x\in L_\gamma$, $h_1,h_2\in H,$  $k\in Z$,
$$ [h_1,h_2,\alpha^k(x)]_L=\alpha^k(\gamma(\alpha^{-k}(h_1),\alpha^{-k}(h_2))\alpha(x))=\gamma(\alpha^{-k},\alpha^{-k})(h_1,h_2)\alpha(\alpha^k(x)).\\
$$
Therefore,  $\gamma(\alpha^k, \alpha^k)\in \Gamma$, and $\alpha^k(L_\gamma)\subseteq L_{\gamma(\alpha^{-k},\alpha^{-k})}$, for $\forall k\in Z$.

Conversely, for  $\forall y\in L_{\gamma(\alpha^{-k},\alpha^{-k})}$, we have $[h_1,h_2,y]_L=\gamma(\alpha^{-k}(h_1),\alpha^{-k}(h_2))\alpha(y)$.
 Since $\alpha$ is an algebra automorphism, there is $w\in L$ such that  $\alpha^k(w)=y$. Therefore, $\forall h_1,h_2\in H$, $k\in Z$,
$$[h_1,h_2,w]_L=[h_1,h_2,\alpha^{-k}(y)]_L=\alpha^{-k}(\gamma(h_1,h_2)\alpha(y))=\gamma(h_1,h_2)\alpha(w).
$$
It follows that $w\in L_\gamma$, and $L_{\gamma(\alpha^{-k},\alpha^{-k})}\subseteq \alpha^k(L_\gamma)$, for $\forall k\in Z$. We get $2).$

 Thanks to Definition \ref{defin:2} and Eqs.\eqref{eq:h1} and \eqref{eq:s1}, $\forall x\in L_{\gamma_1}$, $y\in L_{\gamma_2}$, $z\in L_{\gamma_3}$, $h_1,h_2\in H$,
\begin{equation*}
\begin{split}
&[h_1,h_2,[x,y,z]_L]_L=[\alpha(\alpha^{-1}(h_1)),\alpha(\alpha^{-1}(h_2)),[x,y,z]_L]_L\\
=&[\gamma_1(\alpha^{-1}(h_1),\alpha^{-1}(h_2))\alpha(x),\alpha(y),\alpha(z)]_L+[\alpha(x),\gamma_2(\alpha^{-1}(h_1),\alpha^{-1}(h_2))\alpha(y),\alpha(z)]_L\\
+&[\alpha(x),\alpha(y),\gamma_3(\alpha^{-1}(h_1),\alpha^{-1}(h_2))\alpha(z)]_L=(\gamma_1+\gamma_2+\gamma_3)(\alpha^{-1},\alpha^{-1})(h_1,h_2)\alpha([x,y,z]_L).
\end{split}
\end{equation*}
Therefore, $(\gamma_1+\gamma_2+\gamma_3)(\alpha^{-1}, \alpha^{-1})\in \Gamma,$ and $[x,y,z]_L\in L_{(\gamma_1+\gamma_2+\gamma_3)(\alpha^{-1}, \alpha^{-1})}$. We get $3).$

For $\forall a\in A_{\lambda_1}$, $b\in A_{\lambda_2}$, since    $\rho(h_1,h_2)\in Der_{\phi}(A)$, for $\forall h_1,h_2\in H$,  we have
\begin{equation*}
\begin{split}
&\rho(h_1,h_2)(ab)=\phi(a)\rho(h_1,h_2)(b)+\rho(h_1,h_2)(a)\phi(b)\\
=&\phi(a)\lambda_1(h_1,h_2)\phi(b)+\lambda_2(h_1,h_2)\phi(a)\phi(b)=(\lambda_1+\lambda_2)(h_1,h_2)\phi(ab).
\end{split}
\end{equation*}
Thus $\lambda_1+\lambda_2\in \Lambda$, and $ab\in A_{\lambda_1+\lambda_2}$. We get $4).$

 Thanks to Definition \ref{defin:hl}, $\forall a\in A_\lambda$, $x\in L_\gamma$ and $h_1,h_2\in H$,

$
[h_1,h_2,ax]_L=$ $\phi(a)[h_1,h_2,x]_L+$  $\rho(h_1,h_2)(a)\alpha(x)$ $=\phi(a)\gamma(h_1,h_2)\alpha(x)$ $+\lambda(h_1,h_2)\phi(a)\alpha(x)
$
\\$=(\lambda+\gamma)(h_1,h_2)\alpha(ax)$.\quad Therefore, $\lambda+\gamma\in \Gamma$ and $ax\in L_{\lambda+\gamma}$. We get $5)$.

 Since $(A,\rho,\phi)$ is a representation of $(L,[~,~,~]_L,\alpha)$,  $\forall a\in A_\lambda$, $x\in L_{\gamma_1}$, $y\in L_{\gamma_2}$ and $h_1,h_2\in H$, we have
\begin{equation*}
\begin{split}
&\rho(h_1,h_2)(\rho(x,y)(a))=\rho(\alpha(\alpha^{-1}(h_1)),\alpha(\alpha^{-1}(h_2))\rho(x,y)(a)\\
=&\rho(\alpha(x),\alpha(y))\lambda(\alpha^{-1}(h_1),\alpha^{-1}(h_2))\phi(a)+\rho(\gamma_1(\alpha^{-1}(h_1),\alpha^{-1}(h_2))\alpha(x),\alpha(y))\phi(a)\\
+&\rho(\alpha(x),\gamma_2(\alpha^{-1}(h_1),\alpha^{-1}(h_2))\alpha(y))\phi(a)\\
=&(\gamma_1+\gamma_2+\lambda)(\alpha^{-1},\alpha^{-1})(h_1,h_2)\phi(\rho(x,y)(a)).
\end{split}
\end{equation*}
Therefore, $(\gamma_1+\gamma_2+\lambda)(\alpha^{-1},\alpha^{-1})\in \Gamma$, and $\rho(x,y)(a)\in A_{(\gamma_1+\gamma_2+\lambda)(\alpha^{-1},\alpha^{-1})}$. We get $6).$
\end{proof}

\begin{defn}\label{defin:3}
For $\gamma,\gamma'\in \Gamma$, we say that $\gamma$ {\it is connected to} $\gamma'$ (  abbreviated by $\gamma \sim \gamma'$  ) if either $\gamma'=\pm\gamma(\alpha^k,\alpha^k)$ for some $k\in Z$, or there exists $\{\gamma_1,\gamma_2,\cdots,\gamma_{2n+1}\}\subset \pm\Gamma\cup \pm\Lambda\cup \{0\}$, with $n\geq 1$, such that

  1)~$\gamma_1\in \{\gamma(\alpha^k, \alpha^k)~|~k\in Z\}$,

  2)~$\bar{\gamma}_i=\gamma_1(\alpha^{-i}, \alpha^{-i})+\sum\limits_{j=1}^i(\gamma_{2j}+\gamma_{2j+1})(\alpha^{-i-1+j}, \alpha^{-i-1+j}) \in \pm\Gamma, ~~ 1\leq i\leq n-1,$

  3)~$\bar{\gamma}_n=\gamma_1(\alpha^{-n}, \alpha^{-n})+\sum\limits_{j=1}^n(\gamma_{2j}+\gamma_{2j+1})(\alpha^{-i-1+j}, \alpha^{-i-1+j})
\in \{\pm\gamma'(\alpha^{k}, \alpha^{k})~|~k\in Z\}.$\\
 We also say that $\{\gamma_1,\gamma_2,\cdots,\gamma_n\}$ is a connection from $\gamma$ to $\gamma'$.
\end{defn}

\begin{prop}\label{pr:1}
The relation $\sim$ in $\Gamma$ defined by Definition \ref{defin:3} is an equivalence relation.
\end{prop}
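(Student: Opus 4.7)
The plan is to verify the three properties of an equivalence relation on $\Gamma$: reflexivity, symmetry, and transitivity.

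\emph{Reflexivity} follows at once from the first clause of Definition~4.4: for any $\gamma \in \Gamma$, taking $k = 0$ yields $\gamma = \gamma(\alpha^0, \alpha^0)$, so $\gamma \sim \gamma$.

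For \emph{symmetry}, suppose $\gamma \sim \gamma'$. If $\gamma' = \epsilon\,\gamma(\alpha^k, \alpha^k)$ with $\epsilon \in \{\pm 1\}$, then since $\alpha$ is an automorphism we obtain $\gamma = \epsilon\,\gamma'(\alpha^{-k}, \alpha^{-k})$, and therefore $\gamma' \sim \gamma$. Otherwise a connection $\{\gamma_1, \ldots, \gamma_{2n+1}\}$ from $\gamma$ to $\gamma'$ exists, and I would reverse it to produce a connection $\{\delta_1, \ldots, \delta_{2n+1}\}$ from $\gamma'$ to $\gamma$. Since $\bar{\gamma}_n \in \{\pm\gamma'(\alpha^k, \alpha^k) : k \in Z\}$, an appropriate shift places $\delta_1$ in the orbit $\{\gamma'(\alpha^k, \alpha^k) : k \in Z\}$; the remaining $\delta_{2j}, \delta_{2j+1}$ are obtained by applying suitable signs and $\alpha^{e_j}$-shifts to $\gamma_{2(n-j)+3}, \gamma_{2(n-j+1)}$, chosen so that each partial sum $\bar{\delta}_i$ telescopes to a shift of $\pm\bar{\gamma}_{n-i}$, which lies in $\pm\Gamma$ by hypothesis on the original connection. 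The terminal $\bar{\delta}_n$ falls in $\{\pm\gamma(\alpha^k, \alpha^k) : k \in Z\}$ because the only non-cancelled contribution after telescoping is a shifted copy of $\gamma_1$.

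For \emph{transitivity}, given connections $\{\gamma_i\}_{i=1}^{2n+1}$ from $\gamma$ to $\gamma'$ and $\{\eta_j\}_{j=1}^{2m+1}$ from $\gamma'$ to $\gamma''$, I would concatenate them. Because $\bar{\gamma}_n \in \{\pm\gamma'(\alpha^\ell, \alpha^\ell)\}$ and $\eta_1 \in \{\gamma'(\alpha^\ell, \alpha^\ell)\}$, there exist $k \in Z$ and $\epsilon \in \{\pm 1\}$ with $\epsilon\,\eta_1(\alpha^k, \alpha^k) = \bar{\gamma}_n$. Form the length-$2(n+m)+1$ sequence $\gamma_1, \ldots, \gamma_{2n+1}, \epsilon\,\eta_2(\alpha^k, \alpha^k), \ldots, \epsilon\,\eta_{2m+1}(\alpha^k, \alpha^k)$. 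The partial sums for $i \leq n$ agree with $\bar{\gamma}_i$, while for $i > n$ they equal $\epsilon\,\bar{\eta}_{i-n}(\alpha^k, \alpha^k)$ after telescoping; both families lie in $\pm\Gamma$ by the hypothesis applied to each connection, and the final sum lies in $\{\pm\gamma''(\alpha^j, \alpha^j)\}$ because $\bar{\eta}_m$ does. The edge cases where one of $\gamma \sim \gamma'$ or $\gamma' \sim \gamma''$ holds via the first clause of Definition~4.4 reduce immediately by applying the relevant $\alpha^k$-shift to the other connection.

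The main obstacle is the combinatorial bookkeeping of exponents and signs in the telescoping sum $\bar{\gamma}_i = \gamma_1(\alpha^{-i}, \alpha^{-i}) + \sum_{j=1}^i (\gamma_{2j} + \gamma_{2j+1})(\alpha^{-i-1+j}, \alpha^{-i-1+j})$: one must choose the matching shift $k$ between the two connections in the transitivity step, and the reversal formula in the symmetry step, so that every intermediate partial sum lands in $\pm\Gamma \cup \{0\}$ as condition 2) of Definition~4.4 demands and the terminal sum lies in the specified $\alpha$-orbit. Once these shifts are fixed, everything else is a direct substitution.
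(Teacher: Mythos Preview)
Your outline is correct and can be completed, but it follows a different route from the paper.

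\textbf{Order and symmetry.} The paper proves transitivity \emph{before} symmetry, and then exploits transitivity to prove symmetry in small hops: from $\gamma'$ one passes to $\bar{\gamma}_n$ (first clause), then each length-$3$ connection $\{\bar{\gamma}_i(\alpha^2,\alpha^2),\,-\gamma_{2i}(\alpha,\alpha),\,-\gamma_{2i+1}(\alpha,\alpha)\}$ gives $\bar{\gamma}_i\sim\bar{\gamma}_{i-1}$, and transitivity chains these down to $\gamma_1\sim\gamma$. Your approach instead reverses the whole connection in one stroke. That does work: with $\delta_1=\bar{\gamma}_n$ and $\delta_{2j}=-\gamma_{2(n-j+1)}(\alpha^{-(2j-1)},\alpha^{-(2j-1)})$, $\delta_{2j+1}=-\gamma_{2(n-j+1)+1}(\alpha^{-(2j-1)},\alpha^{-(2j-1)})$, one checks $\bar{\delta}_i=\bar{\gamma}_{n-i}(\alpha^{-2i},\alpha^{-2i})\in\pm\Gamma$ and $\bar{\delta}_n=\gamma_1(\alpha^{-2n},\alpha^{-2n})$. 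Your sketch leaves ``suitable signs and $\alpha^{e_j}$-shifts'' unspecified; you should write these down explicitly, because the exponents are not all equal and the sign is uniformly $-1$, not $\pm1$.

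\textbf{Transitivity.} For the concatenation, the paper avoids shifting the second connection: it bridges with the pair $(-\bar{\gamma}_n,\ \beta_1+\beta_2+\beta_3)$, after which the remaining $\beta_4,\dots,\beta_{2m+1}$ are appended verbatim and every partial sum equals the corresponding $\bar{\beta}_i$ on the nose. Your shifted concatenation also works (and your telescoping check is right), but the paper's trick sidesteps the choice of $k$ and $\epsilon$ and keeps the exponents cleaner. Either way the edge cases you flag are handled identically.

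In short: both arguments are valid. The paper's ordering (transitivity first) lets symmetry be proved without tracking global exponent shifts; your direct reversal is more self-contained but requires the explicit bookkeeping you deferred.
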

\begin{proof}
First,  for $\forall \gamma\in \Gamma$, it is clear $\gamma\sim \gamma$.

For $\gamma, \gamma', \gamma''\in \Gamma $, if  $\gamma\sim \gamma'$ and $\gamma'\sim \gamma''$.
Then there are  connections  $\{\gamma_1,\gamma_2,\cdots,\gamma_{2n+1}\}$ and $\{\beta_1,\beta_2,\cdots,\beta_{2m+1}\}$ contained in $ \pm\Gamma\cup \pm\Lambda\cup \{0\}$  from $\gamma$ to $\gamma'$, and from $\gamma'$ to $\gamma''$, respectively.

If $m\geq 1$, then by Definition \ref{defin:3},  $\{\gamma_1,\cdots,\gamma_{2n+1}, -\bar{\gamma}_n, \beta_1+\beta_2+\beta_3, \beta_4,\cdots,\beta_{2m+1} \}$ is a connection from $\gamma$ to $\gamma''$.

 If $m=0$, then by  $\gamma''\in \{\pm\gamma'(\alpha^{k}, \alpha^{k})~|~k\in Z\}$, $\{\gamma_1,\gamma_2,\cdots,\gamma_{2n+1}\}$ is a connection from $\gamma$ to $\gamma''$, therefore  $\gamma\sim \gamma''$.

Lastly,  we prove that if $\gamma\sim \gamma'$, then $\gamma'\sim \gamma$.

 Suppose $\{\gamma_1,\gamma_2,\cdots,\gamma_{2n+1}\}$ is a connection from $\gamma$ to $\gamma'$, then by Definition \ref{defin:3} and Theorem \ref{thm:1}-2),  we have $\gamma'\sim \bar{\gamma}_n$.

Since $\bar{\gamma}_n\sim \bar{\gamma}_n(\alpha^2, \alpha^2)$, and $\{\bar{\gamma}_n(\alpha^2, \alpha^2), -\gamma_{2n}(\alpha, \alpha),-\gamma_{2n+1}(\alpha, \alpha)\}$ is a connection from $\bar{\gamma}_n(\alpha^2, \alpha^2)$ to $\bar{\gamma}_{n-1}$,  we have $\bar{\gamma}_n\sim \bar{\gamma}_{n-1}$, therefore,   $\bar{\gamma}_n\sim \bar{\gamma_1}$.

By the  completely similar discussion to the above, since $\bar{\gamma}_1\sim \bar{\gamma}_1(\alpha^2, \alpha^2)$, and $\{~~\bar{\gamma}_1(\alpha^2,$ $ \alpha^2), $ $-\gamma_{3}(\alpha, \alpha), $ $-\gamma_{2}(\alpha, \alpha)~~\}$ is a connection from $\bar{\gamma}_1(\alpha^2, \alpha^2)$ to $\gamma_1$,  we get $\gamma_1\sim \gamma$. Therefore,  $\gamma'\sim \gamma$. The proof is complete.
\end{proof}

\begin{prop}\label{pr:2}
To Definition \ref{defin:3}, for $\gamma, \gamma'\in \Gamma$, the following assertions hold.

$1)$ If $\gamma\sim\gamma'$, then $\gamma(\alpha^{k_1}, \alpha^{k_1})\sim \gamma'(\alpha^{k_2}, \alpha^{k_2})$, $\forall k_1,k_2\in Z$.

$2)$ For  $\mu,\beta\in \pm\Gamma\cup \pm\Lambda\cup \{0\}$, if $\gamma\sim \gamma'$ and $\gamma+\mu+\beta\in \Gamma$, then $\gamma'\sim \gamma+\mu+\beta$.
\end{prop}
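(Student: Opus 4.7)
The plan is to reduce both parts to Proposition \ref{pr:1} by producing short, explicit connections whose existence is guaranteed by the very first clause of Definition \ref{defin:3} together with Theorem \ref{thm:1}(1)--(2). No nontrivial induction is required.

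For part (1), the defining clause ``$\gamma' = \pm\gamma(\alpha^k,\alpha^k)$ for some $k\in Z$'' of Definition \ref{defin:3} immediately yields $\gamma \sim \gamma(\alpha^{k_1},\alpha^{k_1})$ upon choosing $k=k_1$, and similarly $\gamma' \sim \gamma'(\alpha^{k_2},\alpha^{k_2})$ upon choosing $k=k_2$. Combining with the hypothesis $\gamma \sim \gamma'$ and invoking transitivity and symmetry (Proposition \ref{pr:1}), one then chains
\[
\gamma(\alpha^{k_1},\alpha^{k_1}) \ \sim\ \gamma \ \sim\ \gamma' \ \sim\ \gamma'(\alpha^{k_2},\alpha^{k_2}),
\]
which proves (1).

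For part (2), the key step is to build a length-one connection from $\gamma$ directly to $\gamma+\mu+\beta$. I would set
\[
\gamma_1 := \gamma(\alpha,\alpha), \qquad \gamma_2 := \mu(\alpha,\alpha), \qquad \gamma_3 := \beta(\alpha,\alpha).
\]
By Theorem \ref{thm:1}(1)--(2), the pair-wise $\alpha$-twist preserves membership in $\pm\Gamma \cup \pm\Lambda \cup \{0\}$, so each $\gamma_i$ is an admissible entry of a connection. Clause (1) of Definition \ref{defin:3} is satisfied since $\gamma_1 = \gamma(\alpha^1,\alpha^1)$. With $n=1$ there are no intermediate conditions to check, and clause (3) simplifies to
\[
\bar\gamma_1 = \gamma_1(\alpha^{-1},\alpha^{-1}) + (\gamma_2+\gamma_3)(\alpha^{-1},\alpha^{-1}) = \gamma+\mu+\beta \in \Gamma,
\]
which holds by hypothesis. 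Hence $\{\gamma_1,\gamma_2,\gamma_3\}$ is a valid connection witnessing $\gamma \sim \gamma+\mu+\beta$, and transitivity combined with $\gamma \sim \gamma'$ yields $\gamma' \sim \gamma+\mu+\beta$.

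The only real point to verify is the arithmetic of Definition \ref{defin:3}'s recursion for $\bar\gamma_n$: the inner $(\alpha,\alpha)$-twists inserted into $\gamma_1,\gamma_2,\gamma_3$ are precisely what cancels the outer $(\alpha^{-1},\alpha^{-1})$ shift present in $\bar\gamma_1$. Beyond this bookkeeping step, both parts follow directly from the equivalence-relation structure already established in Proposition \ref{pr:1}, and no substantive obstacle is anticipated.
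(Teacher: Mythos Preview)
Your argument is correct. For part~(1) you take a slightly different route than the paper: instead of splitting into the two cases of Definition~\ref{defin:3} and showing the same witness transfers to the twisted roots, you observe directly that $\gamma\sim\gamma(\alpha^{k_1},\alpha^{k_1})$ and $\gamma'\sim\gamma'(\alpha^{k_2},\alpha^{k_2})$ by the first clause of the definition and then invoke the equivalence relation from Proposition~\ref{pr:1}. This is cleaner and avoids the case split entirely; the paper's approach, on the other hand, is more explicit in that it exhibits the same connection working for both pairs. For part~(2) the two arguments coincide in spirit: the paper simply takes $\{\gamma,\mu,\beta\}$ as the connection, which already lands in $\{\pm(\gamma+\mu+\beta)(\alpha^k,\alpha^k)\mid k\in Z\}$ at $k=-1$, so your extra $(\alpha,\alpha)$-twist on each entry is unnecessary (though harmless).
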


\begin{proof}
If $\gamma\sim\gamma'$, then  we need to consider two cases:

$\bullet$ If  $\gamma'=\pm\gamma(\alpha^{k_1}, \alpha^{k_1})$, then by Definition \ref{defin:3}, $\forall k_1,k_2\in Z$,
$\gamma'(\alpha^{k_2}, \alpha^{k_2})=\pm\gamma(\alpha^{k_1+k_2}, \alpha^{k_1+k_2}).$ Therefore, $\gamma(\alpha^{k_1}, \alpha^{k_1})\sim \gamma'(\alpha^{k_2}, \alpha^{k_2})$.

  $\bullet\bullet$ If there exists a connection  $\{\gamma_1,\gamma_2,\cdots,\gamma_{2n+1}\}$  from $\gamma$ to $\gamma'$, then by Definition \ref{defin:3}, $\{\gamma_1,$ $\gamma_2,$ $\cdots,$ $\gamma_{2n+1}\}$ is also a connection from $\gamma(\alpha^{k_1}, \alpha^{k_1})$ to $\gamma'(\alpha^{k_2}, \alpha^{k_2})$. It follows $1).$

The result $2)$ follows from that $\{\gamma,\mu,\beta\}$ is a  connection from $\gamma$ to $\gamma+\mu+\beta$.
\end{proof}

Thanks to Proposition \ref{pr:1}, we can consider
$$\overline{\Gamma}=\Gamma/\sim =~~\{~~[\gamma]~~~|~~~\gamma\in \Gamma~~\},
$$
where $\gamma'\in [\gamma]$ if and only if $\gamma'\in \Gamma$ and $\gamma\sim \gamma'$.

For $\gamma\in \Gamma$,  defines
\begin{equation}\label{e:1}
L_{[\gamma]}:=\bigoplus_{\xi\in [\gamma]}L_\xi, \quad
L_{0,[\gamma]}:=\Big(\sum_{\substack{{\xi\in [\gamma]}\\-\xi\in \Lambda}} A_{-\xi}L_\xi \Big)+
\Big(\sum_{\substack{
{\xi,\eta,\delta\in [\gamma]}\\
 \xi+\eta+\delta=0}} [L_\xi,L_\eta,L_\delta]_L\Big).
\end{equation}
Thanks to Theorem \ref{thm:1}, $L_{0,[\gamma]}\subset H$, and  $L_{0,[\gamma]}\cap L_{[\gamma]}=0$. Denotes
\begin{equation}\label{e:2}
I_{[\gamma]}:=L_{0,[\gamma]}\oplus L_{[\gamma]}.
\end{equation}

\begin{lemma}\label{lemma:3}
By the above notations, if $(L,A)$ is a split regular Hom 3-Lie-Rinehart algebra over $F$, then for any $[\gamma]\in \overline{\Gamma}$, the following assertions hold.

\vspace{5mm}
$1)$ ~~$[I_{[\gamma]},I_{[\gamma]},I_{[\gamma]}]_L\subset I_{[\gamma]}$, \quad $2)$ ~~$\alpha(I_{[\gamma]})\subset I_{[\gamma]}$, \quad
$3)$ ~~$AI_{[\gamma]}\subset I_{[\gamma]}$.
\end{lemma}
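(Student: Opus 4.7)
The plan is to expand each of the three assertions by trilinearity of the bracket and of the $A$-action, and then analyze each summand through its weight using Theorem~\ref{thm:1} and the connection machinery of Propositions~\ref{pr:1}--\ref{pr:2}. Throughout I use $I_{[\gamma]}=L_{0,[\gamma]}\oplus L_{[\gamma]}$ with $L_{0,[\gamma]}\subseteq H$, and organize the case analysis according to which summand each argument comes from.

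For (2), $\alpha(L_{[\gamma]})\subseteq L_{[\gamma]}$ is immediate from Theorem~\ref{thm:1}(2) and Proposition~\ref{pr:2}(1), which together give $\alpha(L_\xi)\subseteq L_{\xi(\alpha^{-1},\alpha^{-1})}$ with $\xi(\alpha^{-1},\alpha^{-1})\in[\gamma]$. For $\alpha(L_{0,[\gamma]})\subseteq L_{0,[\gamma]}$ I check the two generator types: by \eqref{eq:hom1}, $\alpha(ax)=\phi(a)\alpha(x)$ lies in $A_{-\xi(\alpha^{-1},\alpha^{-1})}L_{\xi(\alpha^{-1},\alpha^{-1})}$, again a first-type generator; and $\alpha([u,v,w]_L)=[\alpha(u),\alpha(v),\alpha(w)]_L$ is a bracket of three root-space elements whose weights still sum to zero and all remain in $[\gamma]$.

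For (3), I split $a\in A$ as $a_0+\sum_\lambda a_\lambda$ and act on each generator type. On $x\in L_\xi$ ($\xi\in[\gamma]$): $a_\lambda x\in L_{\lambda+\xi}$ by Theorem~\ref{thm:1}(5), which lies in $A_{-\xi}L_\xi\subseteq L_{0,[\gamma]}$ when $\lambda+\xi=0$ and in $L_{[\gamma]}$ otherwise via Proposition~\ref{pr:2}(2). On $bx\in A_{-\xi}L_\xi$: $(a_\lambda b)x$ is a product of the same shape, controlled by Theorem~\ref{thm:1}(4) and (5). On a bracket generator $[u,v,w]_L$ (with $\xi'+\eta'+\delta'=0$): since $\phi$ is an automorphism I write $a_\lambda=\phi(b)$ with $b\in A_{\lambda(\alpha,\alpha)}$, and the derivation identity \eqref{eq:hom1} rewrites $a_\lambda[u,v,w]_L=[u,v,bw]_L-\rho(u,v)(b)\alpha(w)$; Theorem~\ref{thm:1}(3),(5),(6) places both summands in $L_\lambda$, Proposition~\ref{pr:2}(2) gives $\lambda\in[\gamma]$ when $\lambda\ne 0$, and the $\lambda=0$ case lands directly in $L_{0,[\gamma]}$.

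For (1), I expand $[I_{[\gamma]},I_{[\gamma]},I_{[\gamma]}]_L$ multilinearly and count how many arguments lie in $L_{0,[\gamma]}\subseteq H$. The three-in-$H$ contribution vanishes by abelianity of $H$. For contributions with at least one root-space argument, Theorem~\ref{thm:1}(3) gives the resulting weight, and the bracket lands either in $L_{[\gamma]}$ via Proposition~\ref{pr:2}(2) (nonzero weight) or in $L_{0,[\gamma]}$ by its defining second sum (zero weight with all arguments from $[\gamma]$-root spaces). The chief obstacle is the remaining configuration $[h,x,y]_L$ with $h\in L_{0,[\gamma]}$, $x\in L_\xi$, $y\in L_{-\xi}$, $\xi\in[\gamma]$: the total weight is zero so the bracket sits in $H$, but must be shown to sit in $L_{0,[\gamma]}$ specifically. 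I dispose of this on the two generator types of $L_{0,[\gamma]}$. For $h=au$, antisymmetry of the $3$-bracket and the derivation identity yield $[au,x,y]_L=\phi(a)[x,y,u]_L+\rho(x,y)(a)\alpha(u)$, with both summands in $A_{-\xi'(\alpha^{-1},\alpha^{-1})}L_{\xi'(\alpha^{-1},\alpha^{-1})}\subseteq L_{0,[\gamma]}$ by Theorem~\ref{thm:1}(1),(2),(6). For $h=[u,v,w]_L$ the decisive step is to apply Hom-Jacobi \eqref{eq:h1} with the substitution $x_1=\alpha^{-1}(x),\,x_2=\alpha^{-1}(y),\,x_3=u,\,x_4=v,\,x_5=w$, which rewrites $[x,y,[u,v,w]_L]_L$ (equal to $[[u,v,w]_L,x,y]_L$ by antisymmetry) as a sum of three triple brackets of root-space elements whose weights still sum to zero and all lie in $[\gamma]$, hence each belongs to $L_{0,[\gamma]}$ by its defining second sum. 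This judicious choice of Hom-Jacobi variables, which confines the cancelling $\xi,-\xi$ contribution inside inner brackets and prevents the argument from looping back to the very case under study, is the technical heart of the proof.
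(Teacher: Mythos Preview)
Your proof is correct and follows essentially the same route as the paper: expand $I_{[\gamma]}=L_{0,[\gamma]}\oplus L_{[\gamma]}$, break each assertion into cases by weight, and appeal to Theorem~\ref{thm:1} and Proposition~\ref{pr:2} for each summand, using the derivation identity \eqref{eq:hom1} to handle $A$ acting on bracket-type generators of $L_{0,[\gamma]}$. In one respect you go further than the paper: for the case $[L_{0,[\gamma]},L_\delta,L_{-\delta}]_L$ in part~(1) the paper simply asserts the inclusion into $L_{0,[\gamma]}$, whereas you justify it by splitting $L_{0,[\gamma]}$ into its two generator types and invoking \eqref{eq:hom1} and the Hom--Jacobi identity \eqref{eq:h1} respectively, which is a genuine (and welcome) addition.
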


\begin{proof}
Since $L_{0,[\gamma]}\subset H$,  $[L_{0,[\gamma]},L_{0,[\gamma]},L_{0,[\gamma]}]_L=0$, and
\begin{equation}\label{e:3}
\begin{split}
&[I_{[\gamma]},I_{[\gamma]},I_{[\gamma]}]_L=[L_{0,[\gamma]}\oplus L_{[\gamma]},L_{0,[\gamma]}\oplus L_{[\gamma]},L_{0,[\gamma]}\oplus L_{[\gamma]}]_L\\
\subset& [L_{0,[\gamma]},L_{0,[\gamma]},L_{[\gamma]}]_L+[L_{0,[\gamma]},L_{[\gamma]},L_{[\gamma]}]_L+[L_{[\gamma]},L_{[\gamma]},L_{[\gamma]}]_L.
\end{split}
\end{equation}

Thanks to \eqref{e:1} and Theorem \ref{thm:1},  $\forall \delta\in [\gamma]$, $\delta(\alpha^{-1},\alpha^{-1})\in [\gamma]$, and  $[L_{0,[\gamma]},L_{0,[\gamma]},L_\delta]_L\subset L_{\delta(\alpha^{-1},\alpha^{-1})}$. It follows  $[L_{0,[\gamma]},L_{0,[\gamma]},L_{[\gamma]}]_L\subset L_{[\gamma]}\subset I_{[\gamma]}$.

  Next, for  $\delta,\eta\in [\gamma]$ satisfying
$[L_{0,[\gamma]},L_\delta,L_\eta]_L\neq 0$, by Theorem \ref{thm:1}, $[L_{0,[\gamma]},$ $L_\delta,L_\eta]_L\subset L_{(\delta+\eta)(\alpha^{-1},\alpha^{-1})}$.
If $\delta+\eta=0$, then $[L_{0,[\gamma]},L_\delta,L_\eta]_L\subset L_{0,[\gamma]}\subset  I_{[\gamma]}$.

  If $\delta+\eta\neq 0$,  then by Theorem \ref{thm:1},  $(\delta+\eta)(\alpha^{-1},\alpha^{-1})\in \Gamma$, and $\{\delta,\eta,0\}$ is a  connection from $\delta$ to $(\delta+\eta)(\alpha^{-1},\alpha^{-1})$. Therefore,  $[L_{0,[\gamma]},L_\delta,L_\eta]_L\subset L_{(\delta+\eta)(\alpha^{-1},\alpha^{-1})}\subset L_{[\gamma]}\subset  I_{[\gamma]}$.

Finally, we consider the item $[L_{[\gamma]},L_{[\gamma]},L_{[\gamma]}]_L$ in  \eqref{e:3}.

 Let  $\xi,\eta,\delta\in [\gamma]$ satisfy $[L_\xi,L_\eta,L_\delta]_L\neq 0$. Then  by  Theorem \ref{thm:1}, $[L_\xi,L_\eta,L_\delta]_L\subset L_{(\xi+\eta+\delta)(\alpha^{-1},\alpha^{-1})}$.
 If $\xi+\eta+\delta=0$,  then by  Theorem \ref{thm:1}, $[L_\xi,L_\eta,L_\delta]_L\subset L_{0,[\gamma]}\subset I_{[\gamma]}$.

   If $\xi+\eta+\delta\neq 0$,  then $(\xi+\eta+\delta)(\alpha^{-1},\alpha^{-1})\in \Gamma$, and $\{\xi,\eta,\delta\}$ is a  connection from $\xi$ to $(\xi+\eta+\delta)(\alpha^{-1},\alpha^{-1})$. Therefore,   $[L_\xi,L_\eta,L_\delta]_L\subset L_{(\xi+\eta+\delta)(\alpha^{-1},\alpha^{-1})}\subset L_{[\gamma]}\subset I_{[\gamma]}.$
 The result $1)$ follows.

 Thanks to Theorem \ref{thm:1} and \eqref{e:1},
\begin{equation*}
\begin{split}
&\alpha(L_{0,[\gamma]})=\Big(\sum_{\substack{{\xi\in [\gamma]}\\-\xi\in \Lambda}} \phi(A_{-\xi})\alpha(L_\xi) \Big)+
\Big(\sum_{\substack{
{\xi,\eta,\delta\in [\gamma]}\\
 \xi+\eta+\delta=0}} [\alpha(L_\xi),\alpha(L_\eta),\alpha(L_\delta)]_L\Big)\\
 =&\Big(\sum_{\substack{{\xi\in [\gamma]}\\-\xi\in \Lambda}} A_{-\xi(\alpha^{-1},\alpha^{-1})}L_{\xi(\alpha^{-1},\alpha^{-1})} \Big)+
\Big(\sum_{\substack{
{\xi,\eta,\delta\in [\gamma]}\\
 \xi+\eta+\delta=0}}[L_{\xi(\alpha^{-1},\alpha^{-1})},L_{\eta(\alpha^{-1},\alpha^{-1})},L_{\delta(\alpha^{-1},\alpha^{-1})}]_L\Big).\\
\end{split}
\end{equation*}

Therefore $\alpha(L_{0,[\gamma]})\subset L_{0,[\gamma]}$.
By  \eqref{eq:s1}, $\forall h_1,h_2\in H$, $\gamma'\in [\gamma]$, $x\in L_{\gamma'}$,
$$[h_1,h_2,\alpha(x)]_L=\alpha([\alpha^{-1}(h_1),\alpha^{-1}(h_2),x]_L)=\gamma'(\alpha^{-1}(h_1),\alpha^{-1}(h_2))\alpha(\alpha(x)),$$
that is, $\alpha(L_{[\gamma]})\subset L_{[\gamma]}$. Thanks to $\alpha$ is an isomorphism, the result $2)$ holds.

 Lastly, we prove $AI_{[\gamma]}\subset I_{[\gamma]}$.
 By  \eqref{eq:adecom}, \eqref{e:1} and \eqref{e:2}, we have
$$AI_{[\gamma]}=\Big(A_0\oplus\bigoplus_{\lambda\in \Lambda}A_\lambda\Big)\Big(\sum_{\substack{{\xi\in [\gamma]}\\-\xi\in \Lambda}} A_{-\xi}L_\xi +
\Big(\sum_{\substack{
{\xi,\eta,\delta\in [\gamma]}\\
 \xi+\eta+\delta=0}} [L_\xi,L_\eta,L_\delta]_L\oplus \bigoplus_{\xi\in [\gamma]}L_\xi\Big)\Big).$$

We  discuss it in six different cases:

 $\bullet_1$  For $\forall \xi\in [\gamma]$, if  $-\xi\in \Lambda$, then by Theorem \ref{thm:1}-$5)$ and $L$ is an $A$-module,
    we have
$$A_0(A_{-\xi}L_\xi)=(A_0A_{-\xi})L_\xi\subset A_{-\xi}L_\xi\subset L_{0,[\gamma]}.$$

  $\bullet_2$ For any $\xi,\eta,\delta\in [\gamma],$  if $\xi+\eta+\delta=0$. Then by Definition \ref{defin:hl}, $ \forall a_0\in A_0, $ $x\in L_\xi, $ $y\in L_\eta,$ $ z\in L_\delta$,  we have
$$
a_0[x,y,z]_L=[x,y,\phi^{-1}(a_0)z]_L-\rho(x,y)(\phi^{-1}(a_0))\alpha(z)\in[L_\xi,L_\eta,A_0L_\delta]_L+\rho(L_\xi,L_\eta)(A_0)\alpha(L_\delta).$$

If $A_{-\delta(\alpha^{-1},\alpha^{-1})}\neq 0$ (otherwise is trivial), then by Theorem \ref{thm:1}, $-\delta(\alpha^{-1},\alpha^{-1})\in \Lambda$, and
\\$
a_0[x,y,z]_L\in [L_\xi,L_\eta,L_\delta]_L+A_{-\delta(\alpha^{-1},\alpha^{-1})}L_{\delta(\alpha^{-1},\alpha^{-1})}
\subset L_{0,[\gamma]}.
$ Therefore,   $A_0[L_\xi,L_\eta,L_\delta]_L\subset L_{0,[\gamma]}$.

 $\bullet_3$ For any $\xi\in [\gamma]$, by  Theorem \ref{thm:1}, we have $A_0L_\xi\subset L_\xi\subset L_{[\gamma]}$.

   $\bullet_4$ For $\lambda\in \Lambda$, $\xi\in [\gamma]$, if $-\xi\in \Lambda$, we have
$A_\lambda(A_{-\xi}L_\xi)=(A_\lambda A_{-\xi})L_\xi\subset A_{\lambda-\xi}L_\xi.$

 If $\lambda-\xi\in \Lambda$ and $L_\lambda\neq 0$ (otherwise is trivial), by Proposition \ref{pr:2}-2), $\{\xi,\lambda-\xi,0\}$ is a connection from $\xi$ to $\lambda$, then $\lambda\sim \xi$, that is, $\lambda\in [\gamma]$. Therefore
 $A_\lambda(A_{-\xi}L_\xi)\subset L_\lambda\subset L_{[\gamma]}.
 $

 $\bullet_5$ For any $\xi,\eta,\delta\in [\gamma], \lambda\in \Lambda, $ and $\xi+\eta+\delta=0$, by Definition \ref{defin:hl} and Theorem \ref{thm:1},  $\forall a\in A_\lambda,$ $ x\in L_\xi,$ $ y\in L_\eta, $ $z\in L_\delta$, we have
\begin{equation*}
\begin{split}
&a[x,y,z]_L=[x,y,\phi^{-1}(a)z]_L-\rho(x,y)(\phi^{-1}(a))\alpha(z)\\
\in&[L_\xi,L_\eta,A_{\lambda(\alpha,\alpha)} L_\delta]_L+\rho(L_\xi,L_\eta)(A_{\lambda(\alpha,\alpha)})\alpha(L_\delta)\subset[L_\xi,L_\eta,L_{\lambda(\alpha,\alpha)+\delta}]_L+A_{\lambda-\delta(\alpha^{-1},\alpha^{-1})}L_{\delta(\alpha^{-1},\alpha^{-1})}.\\
\end{split}
\end{equation*}
 If $L_\lambda\neq 0$ and $L_{\lambda+\delta}\neq 0$ (otherwise is trivial), then $\lambda\in \Gamma$, $\lambda+\delta\in \Gamma$. Thanks to Proposition \ref{pr:2}-2), $\lambda\sim \delta$, $\lambda\in [\gamma]$. Therefore,
  $A_\lambda[L_\xi,L_\eta,L_\delta]_L\subset L_{[\gamma]}.$

 $\bullet_6$  For $\forall \lambda\in \Lambda$ and $\xi\in [\gamma]$, by Theorem \ref{thm:1}-$5)$ and Proposition \ref{pr:2}-2), $A_\lambda L_\xi\subset L_{\lambda+\xi}\subset L_{[\gamma]}$.

Summarizing a discussion of  six situations, we get $3)$.
\end{proof}

\begin{lemma}\label{lemma:po}
Let $[\gamma],[\eta], [\delta]\in \overline{\Gamma}$ be different from each other. Then we have
$$[I_{[\gamma]},I_{[\eta]},I_{[\delta]}]_L=0 \quad\mbox{and}\quad  [I_{[\gamma]},I_{[\gamma]},I_{[\delta]}]_L=0.$$
\end{lemma}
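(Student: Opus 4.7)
The plan is to expand both sides bilinearly via the decomposition $I_{[\gamma]}=L_{0,[\gamma]}\oplus L_{[\gamma]}$ (and similarly for the other $I$'s), producing up to eight types of $3$-brackets $[X_1,X_2,X_3]_L$ indexed by how many of the $X_i$ lie in the $H$-part $L_{0,[\cdot]}$ versus the graded part $L_{[\cdot]}$. Each type will be eliminated either because $H$ is abelian, or by Theorem \ref{thm:1}(3) combined with the connection calculus of Proposition \ref{pr:2}(2); throughout, the assumed distinctness of the equivalence classes will furnish the needed contradiction.

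The term $[L_{0,[\gamma]},L_{0,[\eta]},L_{0,[\delta]}]_L$ vanishes immediately because $L_{0,[\cdot]}\subseteq H$ and $H$ is abelian. For every term with at least one root-space factor, bilinearity reduces matters to a bracket $[X_1,X_2,X_3]_L$ with $X_i\in L_{\xi_i}$, $\xi_i\in\{0\}\cup\Gamma$ (writing $L_0=H$), the nonzero-entry roots lying in the prescribed classes. By Theorem \ref{thm:1}(3), if this bracket is nonzero then $(\xi_1+\xi_2+\xi_3)(\alpha^{-1},\alpha^{-1})\in\Gamma\cup\{0\}$. When $\xi_1+\xi_2+\xi_3\neq 0$, two applications of Proposition \ref{pr:2}(2) (first with $\gamma=\xi_1,\mu=\xi_2,\beta=\xi_3$, then with $\gamma=\xi_2,\mu=\xi_1,\beta=\xi_3$) give $\xi_1\sim\xi_1+\xi_2+\xi_3\sim\xi_2$; when $\xi_1+\xi_2+\xi_3=0$ and some $\xi_j\neq 0$, writing $\xi_j$ as the negative of the sum of the other two and invoking the same proposition with one $0$-entry yields $\xi_i\sim\xi_j$ for $i\neq j$. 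Either way the nonzero-entry roots collapse into a single equivalence class, contradicting the distinctness of $[\gamma],[\eta],[\delta]$ in the first identity, and of $[\gamma],[\delta]$ in the second.

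The genuinely delicate situation is the two-$H$ case $[h_1,h_2,x_3]_L$ with $h_1\in L_{0,[\gamma]}$, $h_2\in L_{0,[\eta]}$, $x_3\in L_{\xi_3}\subseteq L_{[\delta]}$, because Theorem \ref{thm:1}(3) only places this bracket inside $L_{\xi_3(\alpha^{-1},\alpha^{-1})}$ without showing it vanishes. Here I would unwind $h_1$ into its generators from \eqref{e:1}. If $h_1=ay$ with $a\in A_{-\xi_1}$, $y\in L_{\xi_1}$, $\xi_1\in[\gamma]$, then antisymmetry of the $3$-bracket together with \eqref{eq:hom1} gives $[ay,h_2,x_3]_L=\phi(a)[y,h_2,x_3]_L+\rho(h_2,x_3)(a)\alpha(y)$: the first summand is of the already-resolved mixed root-space type, while the second lies in $A_{(\xi_3-\xi_1)(\alpha^{-1},\alpha^{-1})}L_{\xi_1(\alpha^{-1},\alpha^{-1})}$ by Theorem \ref{thm:1}(1),(5),(6), and is forced to be zero by Proposition \ref{pr:2}(2) (otherwise $\xi_1\sim\xi_3$). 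If instead $h_1=[y_1,y_2,y_3]_L$ with $y_i\in L_{\alpha_i}$, $\alpha_i\in[\gamma]$, $\alpha_1+\alpha_2+\alpha_3=0$, I would specialize the Hom-Jacobi identity \eqref{eq:h1} at $(x_1,x_2,x_3,x_4,x_5)=(y_1,y_2,y_3,\alpha^{-1}(h_2),\alpha^{-1}(x_3))$ to rewrite $[h_1,h_2,x_3]_L$ as a combination of brackets whose inner pieces $[y_3,\alpha^{-1}(h_2),\alpha^{-1}(x_3)]_L$ and $[y_1,y_2,\alpha^{-1}(h_2)]_L$ are single-$H$-factor brackets of the previously resolved type. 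The mirror cases $[h_1,x_2,h_3]_L$ and $[x_1,h_2,h_3]_L$ follow by the same dissection after permuting the arguments via antisymmetry.

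The main obstacle will be precisely this two-$H$ case: it is the only configuration in which Theorem \ref{thm:1} does not immediately land the bracket in a root space that is visibly incompatible with class distinctness, so one has to first crack open $L_{0,[\gamma]}$ via its generators and then perform a Hom-Jacobi reshuffle before the connection calculus of Proposition \ref{pr:2}(2) can be applied. Once these configurations are handled the same case analysis works verbatim for $[I_{[\gamma]},I_{[\gamma]},I_{[\delta]}]_L$, the only change being that the final contradiction reads ``$\xi\sim\zeta$ forces $[\gamma]=[\delta]$''.
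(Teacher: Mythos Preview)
Your approach is essentially the paper's: expand $I_{[\cdot]}=L_{0,[\cdot]}\oplus L_{[\cdot]}$, kill the pure-$H$ term trivially, use Theorem~\ref{thm:1}(3) plus the connection calculus for the root-space terms, and unwind the generators of $L_{0,[\cdot]}$ via \eqref{eq:hom1} and Hom-Jacobi for the delicate cases. Your shortcut for the one-$H$ terms (not unwinding, just collapsing the two nonzero root-entries into one class) is a nice streamlining and works for the \emph{first} identity, because there the two root-entries always come from distinct classes.

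The gap is in the second identity. Your claim that ``the same case analysis works verbatim for $[I_{[\gamma]},I_{[\gamma]},I_{[\delta]}]_L$'' overlooks the one-$H$ term $[L_{[\gamma]},L_{[\gamma]},L_{0,[\delta]}]_L$. Here both nonzero root-entries $\xi_1,\xi_2$ lie in $[\gamma]$, so ``collapsing into a single class'' yields $[\gamma]=[\gamma]$, which is no contradiction; Theorem~\ref{thm:1}(3) only places $[x_1,x_2,h_3]_L$ in $L_{(\xi_1+\xi_2)(\alpha^{-1},\alpha^{-1})}$, which need not vanish. The same problem resurfaces in the two-$H$ term $[L_{0,[\gamma]},L_{[\gamma]},L_{0,[\delta]}]_L$: if you unwind $h_1\in L_{0,[\gamma]}$ as you propose, the leftover one-$H$ bracket $[y,h_3,x_2]_L$ again has both root-entries in $[\gamma]$ and you are stuck. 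The fix is easy but must be stated: unwind the $H$-entry coming from $L_{0,[\delta]}$ (the class foreign to the root-entry) rather than the one from $L_{0,[\gamma]}$; then the resulting zero-$H$ bracket lies in $[L_{[\gamma]},L_{[\gamma]},L_{[\delta]}]_L$ and the $\rho$-term produces a weight in $\Lambda$ that connects $\zeta\in[\delta]$ to some $\xi_i\in[\gamma]$, giving the needed contradiction. The paper avoids this pitfall by systematically unwinding \emph{every} $L_{0,[\cdot]}$-entry rather than relying on the shortcut, which is why its ``similar discussion'' goes through without modification.
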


\begin{proof}
Thanks to \eqref{e:2},
\begin{equation}\label{eq:e1}
\begin{split}
&[I_{[\gamma]},I_{[\eta]},I_{[\delta]}]_L=[L_{0,[\gamma]}\oplus L_{[\gamma]},L_{0,[\eta]}\oplus L_{[\eta]},L_{0,[\delta]}\oplus L_{[\delta]}]_L\\
\subset &[L_{0,[\gamma]},L_{0,[\eta]},L_{[\delta]}]_L+[L_{0,[\gamma]},L_{[\eta]},L_{0,[\delta]}]_L+[L_{0,[\gamma]},L_{[\eta]},L_{[\delta]}]_L+[L_{[\gamma]},L_{0,[\eta]},L_{0,[\delta]}]_L\\
+&[L_{[\gamma]},L_{0,[\eta]},L_{[\delta]}]_L+[L_{[\gamma]},L_{[\eta]},L_{0,[\delta]}]_L
+[L_{[\gamma]},L_{[\eta]},L_{[\delta]}]_L.
\end{split}
\end{equation}

First, we consider the item $[L_{[\gamma]},L_{[\eta]},L_{[\delta]}]_L$ in \eqref{eq:e1}.

 If $[L_{[\gamma]},L_{[\eta]},L_{[\delta]}]_L\neq 0$. Then there exist $\gamma_1\in [\gamma]$, $\eta_1\in [\eta]$ and $\delta_1\in [\delta]$ such that $[L_{\gamma_1},L_{\eta_1},L_{\delta_1}]_L$ $\neq 0$. Thanks to  Theorem \ref{thm:1}, $\gamma_1+\eta_1+\delta_1\in \Gamma$. Since $\gamma\sim \gamma_1, \gamma_1+\eta_1+\delta_1\in \Gamma$ and  Proposition \ref{pr:2}-2), $\gamma\sim \gamma_1+\eta_1+\delta_1$.
 By a similar discussion to the above, we have $\eta\sim \gamma_1+\eta_1+\delta_1$. Therefore,  $\gamma\sim \eta$. Contradiction. Hence \begin{equation}\label{eq:e2}
[L_{[\gamma]},L_{[\eta]},L_{[\delta]}]_L=0.
\end{equation}

Next, we consider the item  $[L_{0,[\gamma]},L_{[\eta]},L_{[\delta]}]_L$ in \eqref{eq:e1}. Thanks to \eqref{e:1} and \eqref{e:2},
$$
[L_{0,[\gamma]},L_{[\eta]},L_{[\delta]}]_L=[\sum_{\substack{{\gamma_1\in [\gamma]}\\-\gamma_1\in \Lambda}} A_{-\gamma_1}L_{\gamma_1} +
\Big(\sum_{\substack{
{\gamma_1,\gamma_2,\gamma_3\in [\gamma]}\\
 \gamma_1+\gamma_2+\gamma_3=0}} [L_{\gamma_1},L_{\gamma_2},L_{\gamma_3}]_L\Big), L_{[\eta]}, L_{[\delta]}]_L.
$$

For $\eta_1\in [\eta]$, $\delta_1\in [\delta]$, $x_{\gamma_i}\in L_{\gamma_i}$, $i=1,2,3$, $x_{\eta_1}\in L_{\eta_1}$, and $x_{\delta_1}\in L_{\delta_1}$, thanks to \eqref{eq:h1}, \eqref{eq:e2} and Theorem \ref{thm:1}-2),
\begin{equation*}
\begin{split}
&[[x_{\gamma_1},x_{\gamma_2},x_{\gamma_3}]_L,x_{\eta_1},x_{\delta_1}]_L
=[[x_{\gamma_1},x_{\gamma_2},x_{\gamma_3}]_L,\alpha(\alpha^{-1}(x_{\eta_1})),\alpha(\alpha^{-1}(x_{\delta_1}))]_L\\
=&[[x_{\gamma_1},\alpha^{-1}(x_{\eta_1}),\alpha^{-1}(x_{\delta_1})]_L,\alpha(x_{\gamma_2}),\alpha(x_{\gamma_3})]_L
+[\alpha(x_{\gamma_1}),[x_{\gamma_2},\alpha^{-1}(x_{\eta_1}),\alpha^{-1}(x_{\delta_1})]_L,\alpha(x_{\gamma_3})]_L\\
+&[\alpha(x_{\gamma_1}),\alpha(x_{\gamma_2}),[x_{\gamma_3},\alpha^{-1}(x_{\eta_1}),\alpha^{-1}(x_{\delta_1})]_L]_L\\
\subset&[[L_{\gamma_1},L_{\eta_1(\alpha,\alpha)},L_{\delta_1(\alpha,\alpha)}]_L,
L_{\gamma_2(\alpha^{-1},\alpha^{-1})},L_{\gamma_3(\alpha^{-1},\alpha^{-1})}]_L
+[L_{\gamma_1(\alpha^{-1},\alpha^{-1})},[L_{\gamma_2},L_{\eta_1(\alpha,\alpha)},L_{\delta_1(\alpha,\alpha)}]_L,L_{\gamma_3(\alpha^{-1},\alpha^{-1})}]_L\\
+&[L_{\gamma_1(\alpha^{-1},\alpha^{-1})},L_{\gamma_2(\alpha^{-1},\alpha^{-1})},[L_{\gamma_3},L_{\eta_1(\alpha,\alpha)},L_{\delta_1(\alpha,\alpha)}]_L]_L,
\end{split}
\end{equation*}
$$
[L_{\gamma_1},L_{\eta_1(\alpha,\alpha)},L_{\delta_1(\alpha,\alpha)}]_L=[L_{\gamma_2},L_{\eta_1(\alpha,\alpha)},L_{\delta_1(\alpha,\alpha)}]_L
=[L_{\gamma_3},L_{\eta_1(\alpha,\alpha)},L_{\delta_1(\alpha,\alpha)}]_L=0.
$$
Therefore,  $[[L_{\gamma_1},L_{\gamma_2},L_{\gamma_3}]_L,L_{\eta_1},L_{\delta_1}]_L=0$.

If there exists $\eta_1\in [\eta]$, $\delta_1\in [\delta]$ such that $[A_{-\gamma_1}L_{\gamma_1},L_{\eta_1},L_{\delta_1}]_L\neq 0$. By Definition \ref{defin:hl},
$$
[A_{-\gamma_1}L_{\gamma_1},L_{\eta_1},L_{\delta_1}]_L=[L_{\eta_1},L_{\delta_1},A_{-\gamma_1}L_{\gamma_1}]_L\subset \phi(A_{-\gamma_1})[L_{\gamma_1},L_{\eta_1},L_{\delta_1}]_L+\rho(L_{\eta_1},L_{\delta_1})(A_{-\gamma_1})\alpha(L_{\gamma_1}).
$$
Thanks to \eqref{eq:e2} and Theorem \ref{thm:1}-6), we have $[L_{\gamma_1},L_{\eta_1},L_{\delta_1}]_L=0$, and
$$\rho(L_{\eta_1},L_{\delta_1})(A_{-\gamma_1})\alpha(L_{\gamma_1})\subset A_{(\eta_1+\delta_1-\gamma_1)(\alpha^{-1},\alpha^{-1})}L_{\gamma_1(\alpha^{-1},\alpha^{-1})}\neq 0,$$
then $A_{(\eta_1+\delta_1-\gamma_1)(\alpha^{-1},\alpha^{-1})}\neq 0$ and $(\eta_1+\delta_1-\gamma_1)(\alpha^{-1},\alpha^{-1})\in \Lambda$.  We get that
 $\{\gamma_1,-\delta_1,\eta_1+\delta_1-\gamma_1\}$ is  a connection from $\gamma_1$ to $\eta_1$ and $\gamma_1 \sim \eta_1$, which is a contradiction.

Therefore, \begin{equation}\label{eq:e3}
\rho(L_{\eta_1},L_{\delta_1})(A_{-\gamma_1})\alpha(L_{\gamma_1})=0.
\end{equation}

It follows
$$
[L_{0,[\gamma]},L_{[\eta]},L_{[\delta]}]_L=0, ~~[L_{[\gamma]},L_{0,[\eta]},L_{[\delta]}]_L=0, ~~ [L_{[\gamma]},L_{[\eta]},L_{0,[\delta]}]_L=0.
$$

Finally, we consider the item $[L_{0,[\gamma]},L_{0,[\eta]},L_{[\delta]}]_L$ in \eqref{eq:e1}. For any $\delta_1\in [\delta]$, by \eqref{e:1} and \eqref{e:2},
\begin{equation*}
\begin{split}
&[L_{0,[\gamma]},L_{0,[\eta]},L_{[\delta]}]_L\\
=&[\sum_{\substack{{\gamma_1\in [\gamma]}\\-\gamma_1\in \Lambda}} A_{-\gamma_1}L_{\gamma_1} +
\Big(\sum_{\substack{
{\gamma_1,\gamma_2,\gamma_3\in [\gamma]}\\
 \gamma_1+\gamma_2+\gamma_3=0}} [L_{\gamma_1},L_{\gamma_2},L_{\gamma_3}]_L\Big),
\sum_{\substack{{\eta_1\in [\eta]}\\-\eta_1\in \Lambda}} A_{-\eta_1}L_{\eta_1} +\Big(\sum_{\substack{
{\eta_1,\eta_2,\eta_3\in [\gamma]}\\
 \eta_1+\eta_2+\eta_3=0}} [L_{\eta_1},L_{\eta_2},L_{\eta_3}]_L\Big),L_{[\delta]}]_L\\
 \subset&[A_{-\gamma_1}L_{\gamma_1},A_{-\eta_1}L_{\eta_1},L_{\delta_1}]_L+[A_{-\gamma_1}L_{\gamma_1},[L_{\eta_1},L_{\eta_2},L_{\eta_3}]_L,L_{\delta_1}]_L\\
 +&[[L_{\gamma_1},L_{\gamma_2},L_{\gamma_3}]_L,A_{-\eta_1}L_{\eta_1},L_{\delta_1}]_L+[[L_{\gamma_1},L_{\gamma_2},L_{\gamma_3}]_L,
 [L_{\eta_1},L_{\eta_2},L_{\eta_3}],L_{\delta_1}]_L.
\end{split}
\end{equation*}
Thanks to \eqref{eq:e2}, \eqref{eq:e3} and Definition \ref{defin:hl},

\vspace{2mm}$[[L_{\gamma_1},L_{\gamma_2},L_{\gamma_3}]_L,[L_{\eta_1},L_{\eta_2},L_{\eta_3}]_L,L_{\delta_1}]_L\subset [L_{[\gamma]},L_{[\eta]},L_{[\delta]}]_L=0,$

\vspace{2mm}
$[A_{-\gamma_1}L_{\gamma_1},[L_{\eta_1},L_{\eta_2},L_{\eta_3}]_L,L_{\delta_1}]_L
=[[L_{\gamma_1},L_{\gamma_2},L_{\gamma_3}]_L,A_{-\eta_1}L_{\eta_1},L_{\delta_1}]_L=0,
$
\begin{equation*}
\begin{split}
0&=[A_{-\gamma_1}L_{\gamma_1},A_{-\eta_1}L_{\eta_1},L_{\delta_1}]_L=[L_{\delta_1},A_{-\gamma_1}L_{\gamma_1},A_{-\eta_1}L_{\eta_1}]_L\\
&=\phi(A_{-\eta_1})[L_{\delta_1},A_{-\gamma_1}L_{\gamma_1},L_{\eta_1}]_L+\phi(A_{-\gamma_1})\rho(L_{\delta_1},L_{\gamma_1})(A_{-\eta_1})\alpha(L_{\eta_1})\\
&=\phi(A_{-\eta_1}A_{-\gamma_1})[L_{\eta_1},L_{\delta_1},L_{\gamma_1}]_L+\phi(A_{-\eta_1})\rho(L_{\eta_1},L_{\delta_1})(A_{-\gamma_1})\alpha(L_{\gamma_1})
+\phi(A_{-\gamma_1})\rho(L_{\delta_1},L_{\gamma_1})(A_{-\eta_1})\alpha(L_{\eta_1}).
\end{split}
\end{equation*}
Then $
[L_{0,[\gamma]},L_{0,[\eta]},L_{[\delta]}]_L=0,~~
[L_{0,[\gamma]},L_{[\eta]},L_{0,[\delta]}]_L=0, ~~[L_{[\gamma]},L_{0,[\eta]},L_{0,[\delta]}]_L=0. $

Therefore, $[I_{[\gamma]},I_{[\eta]},I_{[\delta]}]_L=0$.

By a complete similar discussion to that above, we have $[I_{[\gamma]},I_{[\gamma]},I_{[\delta]}]_L=0$.
\end{proof}

\begin{theorem}\label{thm:2} For any $[\gamma]\in \overline{\Gamma}$,
$I_{[\gamma]}$ is an ideal of split regular Hom 3-Lie-Rinehart algebra $(L, A)$.
\end{theorem}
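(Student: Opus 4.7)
The plan is to verify the three conditions required by Definition \ref{defn:subandideal} part 2) for $I_{[\gamma]}$ to be an ideal of the Hom 3-Lie-Rinehart algebra $(L,A)$: (i) $[I_{[\gamma]}, L, L]_L \subseteq I_{[\gamma]}$ together with $\alpha(I_{[\gamma]}) \subseteq I_{[\gamma]}$; (ii) $AI_{[\gamma]} \subseteq I_{[\gamma]}$; (iii) $\rho(I_{[\gamma]}, L)(A) L \subseteq I_{[\gamma]}$. The $\alpha$-invariance in (i) and the identity (ii) are already parts 2) and 3) of Lemma \ref{lemma:3}, so the task reduces to the bracket condition in (i) and the Rinehart condition (iii).

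For the bracket condition, use the decomposition $L = H \oplus \bigoplus_{[\eta] \in \overline{\Gamma}} L_{[\eta]}$ together with $L_{[\eta]} \subseteq I_{[\eta]}$. Expand $[I_{[\gamma]}, L, L]_L$ as a sum of terms $[I_{[\gamma]}, U, V]_L$ where each of $U, V$ is either $H$ or some $L_{[\eta]}$. Terms with both $U$ and $V$ inside $L_{[\gamma]}$ are absorbed by Lemma \ref{lemma:3} part 1), while any term where at least one of $U, V$ lies in $L_{[\eta]} \subseteq I_{[\eta]}$ with $[\eta] \ne [\gamma]$ vanishes by Lemma \ref{lemma:po} (applying antisymmetry when both non-$H$ factors live in the same off-diagonal class). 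What remains are terms of the form $[I_{[\gamma]}, H, H]_L$ and $[I_{[\gamma]}, H, L_{[\eta]}]_L$. Splitting $I_{[\gamma]} = L_{0,[\gamma]} \oplus L_{[\gamma]}$ and using $L_{0,[\gamma]} \subseteq H$, one sees $[L_{0,[\gamma]}, H, H]_L \subseteq [H, H, H]_L = 0$, while $[L_{[\gamma]}, H, H]_L \subseteq L_{[\gamma]}$ by the root-space definition \eqref{eq:s1}. For $[L_{[\gamma]}, H, L_{[\eta]}]_L$ with $[\eta] \ne [\gamma]$, Theorem \ref{thm:1} part 3) places the output in some $L_{(\xi+\xi')(\alpha^{-1},\alpha^{-1})}$, and Proposition \ref{pr:2} part 2) applied to the connection $\{\xi, 0, \xi'\}$ would force $\xi \sim \xi'$, contradicting $[\gamma] \ne [\eta]$, so the summand vanishes. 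The companion piece $[L_{0,[\gamma]}, H, L_{[\eta]}]_L$ is handled by descending to generators: for $ab \in L_{0,[\gamma]}$ with $a \in A_{-\xi}$, $b \in L_\xi$, $\xi \in [\gamma]$, rewrite $[ab, h, y]$ via \eqref{eq:hom1}; the two resulting summands lie in the disjoint root spaces $L_{\xi'(\alpha^{-1},\alpha^{-1})}$ and $L_{\xi(\alpha^{-1},\alpha^{-1})}$, which forces each to vanish. Generators of the form $[L_\xi, L_\eta, L_\delta]_L$ with $\xi+\eta+\delta = 0$ are handled by the same Hom-Jacobi expansion that appears in the proof of Lemma \ref{lemma:po}.

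For the Rinehart condition (iii), perform an entirely parallel weight analysis: for $x \in I_{[\gamma]}$, $y \in L$, $a \in A$, $z \in L$, decompose each factor using \eqref{eq:rootdecom} and \eqref{eq:adecom}, and track weights through parts 5) and 6) of Theorem \ref{thm:1}; any contribution whose output weight class differs from $[\gamma]$ is killed by Proposition \ref{pr:2} part 2), while the surviving contributions land either in some $L_\mu$ with $\mu \in [\gamma]$ (hence in $L_{[\gamma]}$) or in a product $A_{-\xi} L_\xi$ with $\xi \in [\gamma]$ (a defining generator of $L_{0,[\gamma]}$). The main obstacle throughout is the mixed piece $[L_{0,[\gamma]}, H, L_{[\eta]}]_L$ with $[\eta] \ne [\gamma]$, where neither Lemma \ref{lemma:3} nor Lemma \ref{lemma:po} applies directly because $H$ itself is not contained in $I_{[\gamma]}$; overcoming it requires descending to the explicit generators of $L_{0,[\gamma]}$ and combining \eqref{eq:hom1} with the connection machinery of Definition \ref{defin:3} to engineer a cancellation across disjoint weight spaces.
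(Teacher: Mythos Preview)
Your overall strategy matches the paper's, and you are right to single out $[L_{0,[\gamma]},H,L_{[\eta]}]_L$ with $[\eta]\ne[\gamma]$ as the one piece that neither Lemma~\ref{lemma:3} nor Lemma~\ref{lemma:po} covers directly (the paper's own proof passes over it in silence). However, your handling of that term is wrong. Writing $[h,y,ab]_L=\phi(a)[h,y,b]_L+\rho(h,y)(a)\alpha(b)$ with $a\in A_{-\xi}$, $b\in L_\xi$, $y\in L_{\xi'}$, the second summand does \emph{not} lie in $L_{\xi(\alpha^{-1},\alpha^{-1})}$: by Theorem~\ref{thm:1} parts 1), 2), 5), 6) one has $\rho(h,y)(a)\in A_{(\xi'-\xi)(\alpha^{-1},\alpha^{-1})}$ and $\alpha(b)\in L_{\xi(\alpha^{-1},\alpha^{-1})}$, so the product sits in $L_{\xi'(\alpha^{-1},\alpha^{-1})}$ --- the \emph{same} root space as the first summand. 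There is no disjointness and no forced cancellation. The correct way to kill the second summand is again a connection argument: if $\rho(h,y)(a)\alpha(b)\ne0$ then $(\xi'-\xi)(\alpha^{-1},\alpha^{-1})\in\Lambda$, hence $\xi'-\xi\in\Lambda$ by Theorem~\ref{thm:1} part 1), and then $\{\xi,\xi'-\xi,0\}$ connects $\xi$ to $\xi'$ via Proposition~\ref{pr:2} part 2), contradicting $[\gamma]\ne[\eta]$.

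For the Rinehart condition (iii) your ``parallel weight analysis'' is far more laborious than what the paper does. The paper dispatches (iii) in one line by rewriting \eqref{eq:hom1} as $\rho(x,y)(a)\alpha(z)=[x,y,az]_L-\phi(a)[x,y,z]_L$; since $\alpha$ is bijective this gives
\[
\rho(I_{[\gamma]},L)(A)L\ \subset\ [I_{[\gamma]},L,AL]_L+\phi(A)[I_{[\gamma]},L,L]_L\ \subset\ I_{[\gamma]}+AI_{[\gamma]}\ \subset\ I_{[\gamma]},
\]
reducing (iii) entirely to the already-established bracket condition and Lemma~\ref{lemma:3} part 3). You should use this shortcut rather than rerun the weight bookkeeping.
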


\begin{proof}
By Lemma \ref{lemma:3} and \ref{lemma:po}, we have
\begin{equation*}
\begin{split}
&[I_{[\gamma]},L,L]_L=[I_{[\gamma]},H\oplus\bigoplus_{\xi\in [\gamma]}L_\xi\oplus\bigoplus_{\delta\notin [\gamma]}L_\delta,H\oplus\bigoplus_{\xi\in [\gamma]}L_\xi\oplus\bigoplus_{\delta\notin [\gamma]}L_\delta]_L\\
\subset&[I_{[\gamma]},H,H]_L+[I_{[\gamma]},H,L_{[\gamma]}]_L+[I_{[\gamma]},H,L_{\delta}]_L+[I_{[\gamma]},L_{[\gamma]},L_{[\gamma]}]_L\\
+&[I_{[\gamma]},L_{[\gamma]},L_{\delta}]_L+[I_{[\gamma]},L_{\delta},H]_L+[I_{[\gamma]},L_{\delta},L_{[\gamma]}]_L+[I_{[\gamma]},L_{\delta},L_{\delta}]_L
\subset I_{[\gamma]}.
\end{split}
\end{equation*}
Then $I_{[\gamma]}$ is a 3-Lie ideal of $L$, and
$$\rho(I_{[\gamma]},L)(A)L\subset \phi(A)[I_{[\gamma]},L,L]_L+[I_{[\gamma]},L,AL]_L\subset I_{[\gamma]}.$$
Follows from  Lemma \ref{lemma:3},  $I_{[\gamma]}$ is an ideal of split regular Hom 3-Lie-Rinehart algebra $(L,A)$.
\end{proof}

\

For a Hom 3-Lie-Rinehart algebra $(L,A,[~,~,~]_L,\phi,\alpha,\rho)$, denotes
\begin{equation}\label{eq:center}
Z_L(A):= \{ a ~|~a\in A, ax=0, \forall x\in L\},~~  Z_{\rho}(L):= \{ x ~|~x\in L, [x, L, L]_L=0, \rho(x, L)=0\}.
\end{equation}

Then  $Z_{\rho}(L)=Z(L)\cap Ker\rho$, and  $Z_{\rho}(L)$ is an ideal of $L$.

\begin{theorem}\label{thm:directsum}
If $Z_{\rho}(L)=0$ and $H=\big(\sum_{\substack{{\xi\in \Gamma}\\-\xi\in \Lambda}} A_{-\xi}L_\xi \big)+
\big(\sum_{\substack{
{\xi,\eta,\delta\in \Gamma}\\
 \xi+\eta+\delta=0}} [L_\xi,L_\eta,L_\delta]_L\big)$, then
 $$L=\bigoplus_{[\gamma]\in \overline {\Gamma}}I_{[\gamma]}.$$
\end{theorem}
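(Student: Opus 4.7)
The plan is to prove $L = \bigoplus_{[\gamma]\in\overline{\Gamma}} I_{[\gamma]}$ in two stages: first that the sum equals $L$ (totality), and then that the sum is direct. For totality, I combine the root-space decomposition $L = H \oplus \bigoplus_{\gamma\in\Gamma} L_\gamma$ with the fact that the equivalence classes of Proposition~\ref{pr:1} partition $\Gamma$, so $\bigoplus_\gamma L_\gamma = \bigoplus_{[\gamma]} L_{[\gamma]}$. For the Cartan part, the hypothesis on $H$ expresses it as a sum of generators of two types: $A_{-\xi}L_\xi$, which is contained in $L_{0,[\xi]}$ by definition, and $[L_\xi,L_\eta,L_\delta]_L$ with $\xi+\eta+\delta=0$, for which Lemma~\ref{lemma:po} forces $\xi,\eta,\delta$ to lie in a common equivalence class (otherwise the triple bracket would vanish), again placing the term in some $L_{0,[\xi]}$. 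Hence $H \subseteq \sum_{[\gamma]} L_{0,[\gamma]}$ and $L = \sum_{[\gamma]} I_{[\gamma]}$.

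For directness, take $z \in I_{[\gamma]} \cap \sum_{[\eta]\neq[\gamma]} I_{[\eta]}$ and write $z = \sum_{[\eta]\neq[\gamma]} z_{[\eta]}$ with $z_{[\eta]}\in I_{[\eta]}$. The root-space components of $z$ lie simultaneously in $L_{[\gamma]}$ and in $\bigoplus_{[\eta]\neq[\gamma]} L_{[\eta]}$, and hence vanish because distinct equivalence classes index disjoint subsets of $\Gamma$; thus $z \in L_{0,[\gamma]} \subseteq H$ with each $z_{[\eta]} \in L_{0,[\eta]}$. The strategy is to deduce $z \in Z_\rho(L)=0$. For the centrality condition $[z,L,L]_L = 0$, pick $x,y \in L$ and decompose $x = \sum x_{[\mu]}$, $y = \sum y_{[\nu]}$ using the already-established sum $L = \sum I_{[\mu]}$. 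By Lemma~\ref{lemma:po} every term in the trilinear expansion $[z,x_{[\mu]},y_{[\nu]}]_L \in [I_{[\gamma]},I_{[\mu]},I_{[\nu]}]_L$ vanishes unless $[\mu]=[\nu]=[\gamma]$, reducing $[z,x,y]_L$ to $[z,x_{[\gamma]},y_{[\gamma]}]_L$. Substituting $z = \sum_{[\eta]\neq[\gamma]} z_{[\eta]}$ into this surviving term and applying the lemma once more (each $[z_{[\eta]},x_{[\gamma]},y_{[\gamma]}]_L \in [I_{[\eta]},I_{[\gamma]},I_{[\gamma]}]_L = 0$ because $[\eta]\neq[\gamma]$) gives $[z,L,L]_L = 0$, so $z \in Z(L)$.

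It remains to verify $\rho(z,L) = 0$. Centrality of $z$ together with the Rinehart identity $[z,y,ax]_L = \phi(a)[z,y,x]_L + \rho(z,y)(a)\alpha(x)$ immediately yields $\rho(z,y)(a)\alpha(x) = 0$ for all $x,y\in L$, $a\in A$, so $\rho(z,y)(a)\cdot L = 0$ since $\alpha$ is an automorphism. The main obstacle is upgrading this $L$-annihilation to the statement $\rho(z,y)(a) = 0$ in $A$; my plan is to exploit the explicit generators of $L_{0,[\gamma]}$ --- elements of the form $a_{-\xi}x_\xi$ or $[x_\xi,y_\eta,w_\delta]_L$ with indices in $[\gamma]$ --- together with the $A$-bilinearity \eqref{eq:amod} and the representation axiom \eqref{eq:hr2} to rewrite $\rho(z,y)$ on weight vectors $a_\lambda \in A_\lambda$, and then apply a weight-match cancellation on $A$ paralleling the Lemma~\ref{lemma:po} argument used for the bracket. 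Once this is secured, $z \in Z(L) \cap \mathrm{Ker}\,\rho = Z_\rho(L) = 0$, which completes the direct sum decomposition.
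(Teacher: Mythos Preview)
Your overall strategy matches the paper's: establish $L=\sum_{[\gamma]}I_{[\gamma]}$ from the hypothesis on $H$, then show any element of $I_{[\gamma]}\cap\sum_{[\eta]\neq[\gamma]}I_{[\eta]}$ lies in $Z_\rho(L)=0$ by combining Lemma~\ref{lemma:po} with the Rinehart identity. Two points of comparison are worth noting.

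First, your detour showing $z\in H$ is absent from the paper and unnecessary. The paper argues directly: writing $L=I_{[\gamma]}+\sum_{[\xi]\neq[\gamma]}I_{[\xi]}$ and using \emph{both} memberships of $z$, one has
\[
[z,L,L]_L\subset [z,I_{[\gamma]},I_{[\gamma]}]_L+[z,I_{[\gamma]},\textstyle\sum_{[\xi]\neq[\gamma]}I_{[\xi]}]_L+[z,\textstyle\sum_{[\xi]\neq[\gamma]}I_{[\xi]},\textstyle\sum_{[\xi]\neq[\gamma]}I_{[\xi]}]_L,
\]
and each summand vanishes by Lemma~\ref{lemma:po} after substituting whichever description of $z$ puts it in a class different from the other two slots. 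This is the same mechanism you use, just without first reducing to $z\in H$.

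Second, and more substantively, your elaborate weight-matching plan for upgrading $\rho(z,y)(a)\cdot L=0$ to $\rho(z,y)(a)=0$ does not appear in the paper. The paper's proof reads, in full: ``Thanks to Definition~\ref{defin:hl}, $\rho(x,L)=0$.'' That is, having obtained $\rho(z,y)(a)\alpha(x)=0$ from \eqref{eq:hom1} exactly as you do, the paper concludes $\rho(z,L)=0$ immediately. Your concern that this passage requires the $A$-action on $L$ to be faithful (equivalently $Z_L(A)=0$) is legitimate, and the paper does not address it; but your proposed remedy via \eqref{eq:amod}, \eqref{eq:hr2} and weight cancellation is left too vague to assess, and it is not clear that those identities suffice to force $\rho(z,y)(a)=0$ in $A$ rather than merely in $A/Z_L(A)$. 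If you pursue this route you should expect to need an additional hypothesis such as $Z_L(A)=0$, which the paper invokes elsewhere (e.g.\ in the final theorem of the section) but not here.
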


\begin{proof}
Since $H=\big(\sum_{\substack{{\xi\in \Gamma}\\-\xi\in \Lambda}} A_{-\xi}L_\xi \big)+
\big(\sum_{\substack{
{\xi,\eta,\delta\in \Gamma}\\
 \xi+\eta+\delta=0}} [L_\xi,L_\eta,L_\delta]\big)$, by Definition \ref{defin:2} and Theorem \ref{thm:2}, $L=\sum_{[\gamma]\in \overline{\Gamma}}I_{[\gamma]}.$
Therefore, we only need to prove that  $\sum_{[\gamma]\in \bar{\Gamma}}I_{[\gamma]}=\bigoplus_{[\gamma]\in \overline{\Gamma}}I_{[\gamma]}$.

For $x\in I_{[\gamma]}\cap (\sum_{\substack{{\xi\in \overline{\Gamma}}\\ [\xi]\neq[\gamma]}} I_{[\xi]})$, by Lemma \ref{lemma:po},
 $$[x,L, L]_L\subset[x, I_{[\gamma]}, I_{[\gamma]}]_L+[x, I_{[\gamma]}, \sum_{\xi\notin[\gamma]} I_{[\xi]}]_L+[x, \sum_{\xi\notin[\gamma]} I_{[\xi]}, \sum_{\xi\notin[\gamma]} I_{[\xi]}]_L=0.$$
 It follows $x\in Z(L)$. Thanks to Definition \ref{defin:hl}, $\rho(x,L)=0$. Therefore, $x\in Z_{\rho}(L)=0$.
\end{proof}

\begin{coro}
Let $(L,A)$ be an indecomposable split regular Hom 3-Lie-Rinehart algebra with $Z_{\rho}(L)=0$, and $H=\big(\sum_{\substack{{\xi\in \Gamma}\\-\xi\in \Lambda}} A_{-\xi}L_\xi \big)+
\big(\sum_{\substack{
{\xi,\eta,\delta\in \Gamma}\\
 \xi+\eta+\delta=0}} [L_\xi,L_\eta,L_\delta]_L\big)$.  Then $\Gamma=[\gamma]$.
\end{coro}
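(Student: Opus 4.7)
The plan is to invoke the decomposition given by Theorem \ref{thm:directsum} and then exploit indecomposability of $(L,A)$ to conclude that the index set $\overline{\Gamma}$ is a singleton.

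First I would note that the hypotheses of Theorem \ref{thm:directsum} are exactly what is assumed in the corollary: $Z_{\rho}(L)=0$ and the description of $H$ as a sum of products $A_{-\xi}L_\xi$ and brackets $[L_\xi,L_\eta,L_\delta]_L$ with $\xi+\eta+\delta=0$. Applying that theorem immediately yields
\begin{equation*}
L=\bigoplus_{[\gamma]\in \overline{\Gamma}}I_{[\gamma]},
\end{equation*}
and by Theorem \ref{thm:2} each $I_{[\gamma]}$ is an ideal of the split regular Hom 3-Lie-Rinehart algebra $(L,A)$. Moreover, each $I_{[\gamma]}$ is nonzero since by \eqref{e:2} it contains $L_{[\gamma]}\supseteq L_\gamma\neq 0$.

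Next I would argue by contradiction. Suppose $\overline{\Gamma}$ contains at least two distinct classes $[\gamma]$ and $[\gamma']$. Fix any $[\gamma_0]\in \overline{\Gamma}$ and write
\begin{equation*}
L = I_{[\gamma_0]} \oplus J, \qquad J:=\bigoplus_{[\xi]\in \overline{\Gamma},\,[\xi]\neq [\gamma_0]}I_{[\xi]}.
\end{equation*}
Since the sum of a family of ideals is an ideal (each is stable under $\alpha$, under multiplication by $A$, under the bracket with $L$ in each slot, and under $\rho(-,L)(A)L$), the subspace $J$ is also an ideal of $(L,A)$. Both $I_{[\gamma_0]}$ and $J$ are nonzero, contradicting the assumption that $(L,A)$ is indecomposable. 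Therefore $\overline{\Gamma}$ has a single element, i.e.\ $\Gamma=[\gamma]$ for any (hence every) $\gamma\in \Gamma$.

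The only real point of care will be verifying that the complementary sum $J$ is genuinely an ideal; this is routine given that each $I_{[\xi]}$ is an ideal by Theorem \ref{thm:2}, so the argument is essentially a one-line consequence of Theorem \ref{thm:directsum} together with the definition of indecomposability in Definition \ref{defn:subandideal}. There is no substantive obstacle — the work was done in the preceding theorem.
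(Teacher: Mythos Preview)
Your proposal is correct and follows essentially the same approach as the paper, which simply writes ``Apply Theorem \ref{thm:directsum}.'' You have merely spelled out the implicit contradiction argument using indecomposability, which is exactly what the one-line proof intends.
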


\begin{proof}
Apply Theorem \ref{thm:directsum}.
\end{proof}

\begin{remark}
For a split regular Hom 3-Lie-Rinehart algebra $(L,A)$, since $A$ is a commutative associative algebra, then the decomposition of $A$ is similar to \cite{B1}. For $\lambda\in \Lambda$, we define
$$A_{0,[\lambda]}:=\big(\sum_{\beta\in [\lambda]} A_{-\beta}A_{\beta} \big)+
\big(\sum_{\substack{
{\xi,\eta\in \Gamma, \beta\in [\lambda]}\\
 \xi+\eta+\beta=0}} \rho(L_\xi,L_\eta)(A_\beta)\big),\quad   A_{[\lambda]}:=\bigoplus \limits_{\beta\in [\lambda]}A_\beta.$$
Thanks to Theorem \ref{thm:1},  $A_{0,[\lambda]}\subset A_0$. Denote
$\mathcal{A}_{[\lambda]}:=A_{0,[\lambda]}\oplus A_{[\lambda]}.$

By a similar discussion to the above, for $\lambda, \beta \in \Lambda$, if $[\lambda]\neq [\beta]$ then $\mathcal{A}_{[\lambda]}\mathcal{A}_{[\beta]}=0$. And if $Z_L(A)=0$ and $A_0=\big(\sum_{\lambda\in \Lambda} A_{-\lambda}A_{\lambda} \big)+
\big(\sum_{\substack{
{\xi,\eta\in \Gamma, \lambda\in \Lambda}\\
 \xi+\eta+\lambda=0}} \rho(L_\xi,L_\eta)(A_\lambda)\big),$ then  $A=\bigoplus_{[\lambda]\in \bar{\Lambda}}\mathcal{A}_{[\lambda]}$. Detailed properties of $A$ can be found in \cite{B1}.
\end{remark}

\begin{lemma}\label{lemma:ide1}
Let $(L,A)$ be a split regular Hom 3-Lie-Rinehart algebra, and $I$ be an ideal of $L$ satisfying  $\alpha(I)=I$. Then
\begin{equation}\label{eq:ideal}
I=(I\cap H)\oplus \bigoplus \limits_{\gamma\in \Gamma}(I\cap L_\gamma).
\end{equation}
\end{lemma}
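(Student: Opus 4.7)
The plan is to fix $x\in I$, decompose it via \eqref{eq:rootdecom} as $x=x_0+\sum_{i=1}^n x_{\gamma_i}$ with $x_0\in H$, pairwise distinct $\gamma_i\in\Gamma$ and $x_{\gamma_i}\in L_{\gamma_i}$, and prove each summand already lies in $I$. The key device is the linear operator
$$T_{h_1,h_2}:L\to L,\qquad T_{h_1,h_2}(y):=\alpha^{-1}\bigl([h_1,h_2,y]_L\bigr),\qquad h_1,h_2\in H,$$
which is well defined because $\alpha$ is an algebra automorphism in the regular setting.

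By \eqref{eq:s1} and the abelianness of $H$, the operator $T_{h_1,h_2}$ vanishes on $H$ and acts on each root space $L_\gamma$ as multiplication by the scalar $\gamma(h_1,h_2)$. Since $I$ is an ideal we have $[h_1,h_2,I]_L\subseteq I$, and since $\alpha(I)=I$ with $\alpha$ bijective we also have $\alpha^{-1}(I)=I$; combining gives $T_{h_1,h_2}(I)\subseteq I$, so $I$ is stable under every such operator.

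The crux is to pick $h_1,h_2\in H$ so that the scalars $\lambda_i:=\gamma_i(h_1,h_2)$ (for $i=1,\dots,n$) are pairwise distinct and all nonzero. Each $\gamma_i$ and each $\gamma_i-\gamma_j$ with $i\neq j$ is a nonzero skew-symmetric bilinear form on $H\times H$; since $F$ is infinite, the finite union of their zero sets cannot exhaust $H\times H$, so a suitable pair $(h_1,h_2)$ exists. This genericity step is what I expect to be the most delicate point to phrase cleanly, though it is a standard consequence of working over an infinite field.

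With such $(h_1,h_2)$ fixed and $T:=T_{h_1,h_2}$, successive iteration gives $T^k(x)=\sum_{i=1}^n\lambda_i^k\,x_{\gamma_i}\in I$ for every $k\geq 1$. Taking $k=1,\dots,n$ produces a linear system in the unknowns $x_{\gamma_1},\dots,x_{\gamma_n}$ whose coefficient matrix is the Vandermonde $(\lambda_i^k)_{k,i}$, invertible because the $\lambda_i$ are distinct and nonzero. Solving expresses each $x_{\gamma_i}$ as an $F$-linear combination of elements of $I$, so $x_{\gamma_i}\in I\cap L_{\gamma_i}$; then $x_0=x-\sum_i x_{\gamma_i}\in I\cap H$, yielding \eqref{eq:ideal}.
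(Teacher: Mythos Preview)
Your argument is correct and follows the same Vandermonde strategy as the paper's proof. Your device $T_{h_1,h_2}=\alpha^{-1}\circ\mathrm{ad}(h_1,h_2)$ and the explicit genericity step actually make the iteration cleaner than the paper's sketch, which applies $\mathrm{ad}^k(h_1,h_2)$ directly (without untwisting by $\alpha^{-1}$) and does not spell out the choice of $(h_1,h_2)$ making the scalars $\gamma_i(h_1,h_2)$ pairwise distinct.
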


\begin{proof}
Since $(L,A)$ is a split regular Hom 3-Lie-Rinehart algebra, we get $L=H\oplus \bigoplus_{\gamma\in \Gamma}L_{\gamma}$.

For any non-zero $x\in I$, if $x\in H$. Then $x\in I\cap H$.

If $x\notin H$, then there are $h_0\in H$ and non-zero $x_{\gamma_j}\in L_{\gamma_j}$, $1\leq j\leq m$, such that
 $x=h_0+\sum_{j=1}^m x_{\gamma_j}$.

 If $m=1$, then there are $h_1, h_2\in H$ such that $\gamma_1(h_1, h_2)\neq 0$. Then
 $$[h_1, h_2, x]_L=[h_1, h_2, h_0+x_{\gamma_1}]_L=\gamma_1(h_1, h_2)\alpha(x_{\gamma_1})\in I.$$
 Thanks to $\alpha(I)=I$ and $\gamma_1(h_1, h_2)\neq 0$, we get $x_{\gamma_1}\in I$, and $h_0\in H.$

If $m\geq 2$, by induction on $m$, suppose $\gamma_m(h_1, h_2)\neq 0$ for some $h_1, h_2\in H$. Then by \eqref{eq:s1},

$$\mbox{ad}^k(h_1, h_2)x=\gamma_1(h_1, h_2)^k\alpha(x_{\gamma_1})+\cdots+\gamma_m(h_1, h_2)^k\alpha(x_{\gamma_m}), \forall k\in Z^+.$$

Thanks to $\alpha (I)=I$ and Vandermonde determinant, we get $x_{\gamma_j}\in I$ for $1\leq j\leq m$ and $h_0\in H$.  Therefore,  \eqref{eq:ideal} holds.
\end{proof}

\begin{lemma}\label{lemma:ide2}
Let $(L,A)$ be a split regular Hom 3-Lie-Rinehart algebra. If an ideal  $I$ satisfies $I\subset H$, then $I\subset Z(L)$.
\end{lemma}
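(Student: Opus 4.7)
Fix $x\in I$. Since $Z(L)=\{y\in L\mid [y,L,L]_L=0\}$ and $L=H\oplus\bigoplus_{\gamma\in\Gamma}L_\gamma$, it suffices to show that the brackets $[x,U,V]_L$ vanish for $U,V$ ranging over the summands $H$ and the root spaces $L_\gamma$. I would treat four cases in order of increasing difficulty.

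\textbf{Case 1: $[x,H,H]_L=0$.} This is immediate because $H$ is abelian.

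\textbf{Case 2: $[x,H,L_\gamma]_L=0$ for every $\gamma\in\Gamma$.} For $h\in H$, $u\in L_\gamma$, the element $[x,h,u]_L$ lies in $I\subseteq H$ because $I$ is an ideal, and simultaneously equals $\gamma(x,h)\alpha(u)\in L_{\gamma(\alpha^{-1},\alpha^{-1})}$ by \eqref{eq:s1} since $x,h\in H$. As $\gamma\in\Gamma$ is non-zero and $\alpha$ preserves $H$, $\gamma(\alpha^{-1},\alpha^{-1})\ne0$, so $H\cap L_{\gamma(\alpha^{-1},\alpha^{-1})}=0$ and the bracket vanishes.

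\textbf{Case 3: $[x,L_\gamma,L_\delta]_L=0$ when $\gamma+\delta\neq 0$.} Writing $x\in H=L_0$ and applying Theorem \ref{thm:1}\,(3) with roots $0,\gamma,\delta$ puts $[x,L_\gamma,L_\delta]_L$ inside $L_{(\gamma+\delta)(\alpha^{-1},\alpha^{-1})}$. Since $I\subseteq H$ and this root space meets $H$ trivially, the bracket is zero.

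\textbf{Case 4 (the main obstacle): $[x,L_\gamma,L_{-\gamma}]_L=0$.} This is the case where the target root space coincides with $L_0=H$, so the dimension argument of Cases 2--3 fails. The plan is to extract it from the Hom-Jacobi identity \eqref{eq:h1}. For $u\in L_\gamma$, $v\in L_{-\gamma}$, and $h_1,h_2\in H$, apply \eqref{eq:h1} with $x_1=x$, $x_2=u$, $x_3=h_1$, $x_4=h_2$, $x_5=v$, so that the inner bracket on the left is $[h_1,h_2,v]_L=-\gamma(h_1,h_2)\alpha(v)$. The left-hand side becomes $-\gamma(h_1,h_2)\alpha([x,u,v]_L)$. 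On the right-hand side the terms involving $[x,u,h_i]_L$ vanish by Case 2, and the remaining term $[\alpha(h_1),\alpha(h_2),[x,u,v]_L]_L$ is zero because $[x,u,v]_L\in I\subseteq H$ while $\alpha(h_i)\in H$ and $H$ is abelian. Choosing $h_1,h_2\in H$ with $\gamma(h_1,h_2)\neq 0$ (available since $\gamma\neq 0$) and using that $\alpha$ is an automorphism yields $[x,u,v]_L=0$.

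Combining all four cases gives $[x,L,L]_L=0$, i.e.\ $x\in Z(L)$, and therefore $I\subseteq Z(L)$. The only delicate step is Case 4; Cases 1--3 are straightforward weight/dimension considerations once the root space decomposition in Definition \ref{defin:2} and Theorem \ref{thm:1} are available.
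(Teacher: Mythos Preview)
Your proof is correct and follows the same overall strategy as the paper: decompose $L=H\oplus\bigoplus_{\gamma\in\Gamma}L_\gamma$ and show each bracket $[x,U,V]_L$ with $U,V\in\{H,L_\gamma\}$ vanishes, using that $I$ is an ideal contained in $H$ together with Theorem~\ref{thm:1}. Cases 1--3 are exactly the paper's argument: the bracket lands simultaneously in $I\subseteq H$ and in a root space $L_{(\gamma+\delta)(\alpha^{-1},\alpha^{-1})}$ with non-zero index, forcing it to vanish.

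Where you go beyond the paper is Case~4. The published proof simply asserts $[I,L_{\gamma'},L_\gamma]_L=0$ for all $\gamma,\gamma'\in\Gamma$ ``thanks to Theorem~\ref{thm:1} and $I\subset H$'', but that weight argument only gives a contradiction when $\gamma+\gamma'\neq 0$; when $\gamma'=-\gamma$ the bracket falls into $L_0=H$ and there is no immediate clash. Your Hom-Jacobi computation (choosing $h_1,h_2$ with $\gamma(h_1,h_2)\neq 0$ and using that the right-hand terms die by Case~2 and by $[H,H,H]_L=0$) cleanly fills this gap. So your argument is not a different route so much as a more complete version of the paper's own proof.
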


\begin{proof}
Thanks to Theorem \ref{thm:1} and $I\subset H$,  $I\cap L_{\gamma}=0$ for $\forall \gamma\in \Gamma,$ and
$$[I, H, L_{\gamma}]_L=0, \quad [I, L_{\gamma'}, L_{\gamma}]_L=0, \quad  \forall \gamma, \gamma'\in \Gamma.$$
Therefore,

$
[I,L,L]_L=[I,H\oplus (\bigoplus\limits_{\gamma\in \Gamma}L_{\gamma}),H\oplus (\bigoplus\limits_{\gamma\in \Gamma}L_{\gamma})]_L
$
$\subset [I, H,\bigoplus\limits_{\gamma\in \Gamma}L_{\gamma}]_L+ [I,\bigoplus\limits_{\gamma\in \Gamma}L_{\gamma},\bigoplus\limits_{\gamma\in \Gamma}L_{\gamma}]_L=0.
$
\end{proof}

\begin{lemma}\label{lemma:ide3}
If a split regular Hom 3-Lie-Rinehart algebra $(L,A)$ can be decomposed into the direct sum of  finite ideals $G_j$, $1\leq j\leq s$, that is, $L=G_1\oplus \cdots \oplus G_s$, then $\alpha(G_j)=G_j$, $1\leq j\leq s$.
\end{lemma}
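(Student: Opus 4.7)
The plan is to exploit only two facts: first, by Definition \ref{defn:subandideal}, every ideal of a Hom 3-Lie-Rinehart algebra is invariant under $\alpha$, so from the outset we have $\alpha(G_j)\subseteq G_j$ for each $j$; second, in a \emph{split regular} Hom 3-Lie-Rinehart algebra the map $\alpha$ is an algebra automorphism, hence a bijection on $L$. The task is therefore to upgrade the inclusion $\alpha(G_j)\subseteq G_j$ to equality using only the direct-sum decomposition and bijectivity of $\alpha$; no root-space machinery is really needed.

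Concretely, I would fix $j$ and pick an arbitrary $y\in G_j$. Since $\alpha$ is surjective there exists $x\in L$ with $\alpha(x)=y$. Decompose $x=x_1+\cdots+x_s$ with $x_i\in G_i$ according to $L=G_1\oplus\cdots\oplus G_s$. Applying $\alpha$ and using $\alpha(G_i)\subseteq G_i$ for every $i$ gives
$$y=\alpha(x)=\alpha(x_1)+\cdots+\alpha(x_s),\qquad \alpha(x_i)\in G_i.$$
Since $y\in G_j$, the uniqueness of the direct-sum decomposition forces $\alpha(x_i)=0$ for $i\neq j$. Because $\alpha$ is injective, this yields $x_i=0$ for $i\neq j$, whence $x=x_j\in G_j$ and $y=\alpha(x_j)\in\alpha(G_j)$. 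Thus $G_j\subseteq \alpha(G_j)$, and combined with the opposite inclusion we conclude $\alpha(G_j)=G_j$.

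There is essentially no obstacle here: the statement is a straightforward consequence of the defining invariance property of an ideal together with the bijectivity of $\alpha$. The only subtlety worth flagging is that the proof does not use the splitting Cartan subalgebra $H$, the root system $\Gamma$, or the weight system $\Lambda$ at all; it is purely a formal argument about automorphisms acting on a direct sum of invariant subspaces, and it would fail in the non-regular case because surjectivity of $\alpha$ is essential for the step ``write $y=\alpha(x)$''.
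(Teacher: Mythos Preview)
Your proof is correct and is essentially identical to the paper's own argument: both use the ideal condition $\alpha(G_j)\subseteq G_j$, then take a preimage under the bijection $\alpha$, decompose it along the direct sum, and use uniqueness together with injectivity of $\alpha$ to force all but one component to vanish. The only difference is notational (the paper swaps the roles of your $x$ and $y$).
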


\begin{proof}
Since $G_j$ are ideals of Hom 3-Lie-Rinehart algebra $(L,A)$, for any $1\leq j\leq s$, we have $\alpha(G_j)\subseteq G_j$.

 For any $x\in G_i$, there exists $y\in L$ such that $\alpha(y)=x$. Suppose $y=y_1+\cdots+y_s$, where  $y_j\in G_j$, $1\leq j\leq s$. Since $\alpha$ is an isomorphism,  $$\alpha(y)=\alpha(y_1)+\cdots+\alpha(y_s)=x\in G_i.$$
 Thanks to $\alpha (G_j)\subseteq G_j$ and $G_i\cap G_j=0$, we get   $\alpha(y_j)=0$, $y_j=0$, for $j\neq i$, that is, $y\in G_i$. Therefore, $\alpha(G_i)= G_i$, for $1\leq i\leq s$.
 \end{proof}

\begin{theorem}
If a  regular Hom 3-Lie-Rinehart algebra $(L,A, \phi, \alpha, \rho)$ can be decomposed into a direct sum of  finite ideals $G_j$, $1\leq j\leq s$,
and $Z_L(A)=0$,
then $(L,A,\phi,\alpha,\rho)$ is a split  regular Hom 3-Lie-Rinehart algebra with  a splitting Cartan subalgebra $H$ if and only if $(G_j, A, \phi, \alpha|_{G_j}, \rho|_{G_j\wedge G_j})$, $1\leq j\leq s$ are split  regular Hom 3-Lie-Rinehart algebras with  a splitting Cartan subalgebra $H_j$,  respectively, such that $H=\bigoplus_{j=1}^sH_j$, $\Gamma=\bigcup_{j=1}^s\Gamma_j$, where $\Gamma_j$  are root systems of $G_j$ associated to $H_j$, respectively.
\end{theorem}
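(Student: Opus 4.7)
The argument naturally splits along the biconditional, and both directions rest on a single structural observation. Since $G_i,G_j$ are ideals with $G_i\cap G_j=0$ for $i\neq j$, we have $[G_i,G_j,L]_L\subseteq G_i\cap G_j=0$. Combined with the Rinehart identity \eqref{eq:hom1}, this gives for any $h_i\in G_i$, $h_j\in G_j$, $a\in A$, $x\in L$
\begin{equation*}
0=[h_i,h_j,ax]_L=\phi(a)[h_i,h_j,x]_L+\rho(h_i,h_j)(a)\alpha(x)=\rho(h_i,h_j)(a)\alpha(x),
\end{equation*}
so $\rho(h_i,h_j)(a)\in Z_L(A)=0$, i.e.\ $\rho(G_i,G_j)=0$ whenever $i\neq j$. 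This ``cross--vanishing'' is exactly what lets the splitting data descend to, and lift from, the ideal summands.

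For the forward direction I set $H_j:=G_j\cap H$. By Lemma~\ref{lemma:ide3} we have $\alpha(G_j)=G_j$, so combined with $\alpha(H)=H$ we get $\alpha(H_j)=H_j$. To prove $H=\bigoplus_j H_j$, I decompose any $h\in H$ as $h=\sum_j h_j$ with $h_j\in G_j$; using cross-vanishing of brackets, each $h_j$ commutes with all of $H$, so by maximality $\langle H,h_j\rangle$ abelian forces $h_j\in H$, hence $h_j\in H_j$. Maximal abelianness of $H_j$ in $G_j$ follows by the same trick: if $x\in G_j$ makes $\langle H_j,x\rangle$ abelian in $G_j$, cross-vanishing promotes this to $\langle H,x\rangle$ abelian in $L$, forcing $x\in H\cap G_j=H_j$. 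The root decomposition of $G_j$ is then obtained by applying Lemma~\ref{lemma:ide1} to $G_j$; identifying each $\gamma_j\in(H_j\wedge H_j)^*$ with the element $\tilde{\gamma}_j\in(H\wedge H)^*$ defined by $\tilde{\gamma}_j(h,h')=\gamma_j(\pi_j(h),\pi_j(h'))$ exhibits $\Gamma_j$ as a subset of $\Gamma$, and conversely any $\gamma\in\Gamma$ is detected on a unique $H_j$ because $L_\gamma\subset G_j$ for some $j$ (write $0\neq x\in L_\gamma$ as $\sum x_j$, apply the eigenvalue equation, and use cross-vanishing on the $G_j$-components). This gives $\Gamma=\bigcup_j\Gamma_j$. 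The weight decomposition of $A$ is then the common refinement of the weight decompositions for each $H_j$, which exist and are compatible because $\rho(h,h')=\sum_j\rho(\pi_j(h),\pi_j(h'))$.

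For the converse I reverse the steps: $H:=\bigoplus_j H_j$ is visibly abelian and $\alpha$-invariant; if $x=\sum x_j\in L$ commutes with $H$ then each $x_j$ commutes with $H_j$ (cross-vanishing again), so $x_j\in H_j$ by maximality in $G_j$, giving maximal abelianness of $H$ in $L$. Every $(G_j)_{\gamma_j}$ extends to the space $L_{\tilde{\gamma}_j}$ with $\tilde{\gamma}_j$ as above, and assembling these across $j$ gives $L=H\oplus\bigoplus_j\bigoplus_{\gamma_j\in\Gamma_j}(G_j)_{\gamma_j}$, the root decomposition of $L$. The weight decomposition of $A$ is again built as the simultaneous refinement; commutativity of the relevant derivations $\rho(H_i,H_i)$ and $\rho(H_j,H_j)$ on $A$ is automatic by \eqref{eq:hr2}--\eqref{eq:hr3} together with cross-vanishing.

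\textbf{Main obstacle.} The routine parts are the abelianness/$\alpha$-invariance checks and the root bookkeeping. The genuinely delicate point is handling the weight decomposition of the \emph{common} algebra $A$: each $(G_j,A)$ comes with its own weight system $\Lambda_j$ referring to the same $A$, and one must verify that the decompositions $A=A_0^{(j)}\oplus\bigoplus_{\lambda_j\in\Lambda_j}A_{\lambda_j}^{(j)}$ admit a simultaneous refinement indexed by functionals on the full $H$, rather than interfering with each other. The hypothesis $Z_L(A)=0$, used to force $\rho(H_i,H_j)=0$ for $i\neq j$, is precisely the tool that makes this decoupling work.
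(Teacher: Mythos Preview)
Your proposal is correct and follows essentially the same route as the paper: both derive the cross-vanishing $\rho(G_i,G_j)=0$ for $i\neq j$ from $Z_L(A)=0$, invoke Lemma~\ref{lemma:ide1} to graded-decompose each $G_j$, and then match roots by restriction/extension between $H$ and the $H_j$. Your only tactical deviation is proving $H=\bigoplus_jH_j$ via maximal abelianness rather than reading it off the sum of the Lemma~\ref{lemma:ide1} decompositions, and your caution about building the weight decomposition of $A$ as a simultaneous refinement is, if anything, more careful than the paper's brief assertion that $A_\lambda=A_\lambda^j$ after extension by zero.
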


\begin{proof}
 If $(L,A,\phi,\alpha,\rho)$ is a split regular Hom 3-Lie-Rinehart algebra, and $L=H\oplus \bigoplus_{\gamma\in\Gamma}L_{\gamma}$.  Thanks to Lemma \ref{lemma:ide1},
    $\alpha(G_j)=G_j$, and $$G_j=(H\cap G_j)\oplus \bigoplus_{\gamma\in\Gamma}(L_{\gamma}\cap G_j).$$

   We conclude that if $L_{\gamma}\cap G_j\neq 0 $ then $\gamma|_{H_j\wedge H_j}\neq 0.$

   In fact,  if $\gamma(H_j, H_j)=0.$ For any non-zero  $x\in L_{\gamma}\cap G_j$, from $H=\bigoplus_{j=1}^s(G_j\cap H)$, we have $[H, H, x]_L=[H_j, H_j, x]_L=\gamma(H_j, H_j)\alpha(x)=0$. Then
   $x\in H$, which is a contradiction. Therefore,

    $$G_j =H_j\oplus \bigoplus_{j_\gamma\in \Gamma_j}G_{j_\gamma}, 1\leq j\leq s,$$ where
    $H_j=G_j\cap H$ and $G_{j_\gamma}=G_j\cap L_{\gamma}\neq 0$,
    \begin{equation}\label{eq:Gammaj}
    \Gamma_j=\{~~~ j_\gamma=\gamma |_{H_j\wedge H_j}\quad |\quad \mbox{where} ~~\gamma\in \Gamma ~~~\mbox{satisfying}~ ~ G_j\cap L_{\gamma}\neq 0~~~\}.
    \end{equation}
By Definition \ref{defn:subandideal},  we have $\rho(G_j, L)AL\subset G_j$ and $\rho(G_j, G_j)A G_j\subset G_j$, $1\leq  j\leq s.$
Thanks to  $Z_L(A)=0$, $\rho(G_i, G_j)A=0$, $1\leq i\neq j\leq s.$ Then
$$A=A^j_0\oplus \bigoplus_{\lambda\in \Lambda_j}A_{\lambda}^j, \quad \mbox{where}\quad
 A^j_0=\{~~ a\in A ~~ | ~~ \rho(H_j, H_j)a=0~~\},$$
    $$A_{\lambda}^j=\{ a\in A ~~|~~ \rho(h_1, h_2)a=\lambda(h_1, h_2)\phi(a), ~~ \forall h_1, h_2\in H_j, \lambda\in \Lambda,~~ \lambda(H_j, H_j)\neq 0\},$$
\begin{equation}\label{eq:Lambdaj}
 \Lambda_j=\{~~\lambda|_{H_j\wedge H_j}, ~~\mbox{where}~~  \lambda\in \Lambda,~~ \lambda(H_j, H_j)\neq 0 \}.
 \end{equation}

  Therefore,   $(G_j, A, \phi, \alpha|_{G_j}, \rho|_{G_j\wedge G_j})$ is a split  regular Hom 3-Lie-Rinehart algebra  with a splitting Cartan subalgebra $H_j$ and the root system $\Gamma_j$ defined by \eqref{eq:Gammaj}, and the weight system $\Lambda_j$ defined by \eqref{eq:Lambdaj} associated to $H_j$.

 Conversely, if $(G_j, A, \phi, \alpha|_{G_j}, \rho|_{G_j\wedge G_j})$, $1\leq j\leq s$, are split  regular Hom 3-Lie-Rinehart algebras, and
  $G_j=H_j\oplus \bigoplus_{j_\gamma\in \Gamma_j}G_{j_\gamma}$, $1\leq j\leq s$  with a splitting Cartan subalgebra $H_j$ and the root system $\Gamma_j$ and the weight system $\Lambda_j$, respectively.

 Then $H=\bigoplus_{j=1}^s H_j$ is an abelian subalgebra of $L$.  For any $\gamma\in \Gamma_j$,  extending $\gamma: H_j\wedge H_j\rightarrow F$ to
 $\gamma: H\wedge H\rightarrow F$ ( still using the original symbol $\gamma$) by
\begin{equation*}
\gamma(h_1, h_2)=\left\{
\begin{split}
0,  & \quad h_1\not\in H_j~~ \mbox{or} ~~h_2\notin H_j,\\
\gamma(h_1, h_2),&\quad h_1, h_2\in H_j,\\
\end{split}
\right. \quad \forall h_1, h_2\in H,
\end{equation*}
we get that $L_{\gamma}=G_{j_\gamma}\neq 0$, and $\gamma\in (H\wedge H)^*_{\neq 0}$. Therefore, $H=\bigoplus_{j=1}^sH_j$ is a splitting Cartan subalgebra of $L$ with
the root system $\Gamma=\cup_{j=1}^s\Gamma_j,$ and
$$
L=H\oplus \bigoplus_{\gamma\in \Gamma} L_{\gamma}.$$

Thanks to Definition \ref{defn:subandideal} and $Z_L(A)=0$, $\rho(G_j, G_i)A=0$ for $1\leq i\neq j\leq s$, and  $\rho(G_j, G_j)A G_j\subset G_j$, $1\leq  j\leq s.$
Therefore, $\rho(H_j, H_j)AG_i=0$, $1\leq i\neq j\leq s$.

By a complete similar discussion to the above, for any $\lambda\in \Lambda_j$, extending $\lambda: H_j\wedge H_j\rightarrow F$ to
 $\lambda: H\wedge H\rightarrow F$ ( still using the original symbol $\lambda$) by
\begin{equation*}
\lambda(h_1, h_2)=\left\{
\begin{split}
0,  & \quad h_1\not\in H_j~~ \mbox{or} ~~h_2\notin H_j,\\
\lambda(h_1, h_2),&\quad h_1, h_2\in H_j,\\
\end{split}
\right. \quad \forall h_1, h_2\in H,
\end{equation*}
we get that $A_{\lambda}=A^j_{\lambda}\neq 0$, and $\lambda\in (H\wedge H)^*_{\neq 0}$. Therefore,  $\Lambda=\cup_{j=1}^s\Lambda_j\subset (H\wedge H)^*_{\neq 0},$ and
$$
A=A_0\oplus \bigoplus_{\lambda\in \Lambda} A_{\lambda}.
$$
where $A_0=\cap_{j=1}^sA^j_0.$ The proof is complete.

\end{proof}

\section*{Acknowledgements}

The first author (R. Bai)  was  supported by the Natural Science Foundation of
Hebei Province, China (A2018201126).

\bibliography{}

\end{document}